\documentclass[11pt, a4paper]{amsart}
%DIF LATEXDIFF DIFFERENCE FILE

%\usepackage{color}
\usepackage{amsfonts}
\usepackage{comment}
\usepackage{amssymb}
\usepackage{mathtools}
\usepackage{amsmath, amsthm,color}
\usepackage{hyperref}
\usepackage{pdfsync}
\usepackage{bbm, dsfont}
\usepackage[margin=0.96in]{geometry}

%\newcommand{\R}{\mathbb{R}}

%%%%%%%%%%%%%%%%%

%\newcommand{\eq}[2][label]{\begin{equation}\label{#1}#2\end{equation}}
%\usepackage{showkeys}

%%%

\newtheorem{theorem}{Theorem}[section]
\newtheorem{corollary}[theorem]{Corollary}
\newtheorem{lemma}[theorem]{Lemma}

\numberwithin{equation}{section}

%\newcounter{vremennyj}
%\newcommand\cond[1]{\setcounter{vremennyj}{\theenumi}\setcounter{enumi}{#1}\labelenumi\setcounter{enumi}{\thevremennyj}}
%DIF PREAMBLE EXTENSION ADDED BY LATEXDIFF
%DIF UNDERLINE PREAMBLE %DIF PREAMBLE
\RequirePackage[normalem]{ulem} %DIF PREAMBLE
\RequirePackage{color}\definecolor{RED}{rgb}{1,0,0}\definecolor{BLUE}{rgb}{0,0,1} %DIF PREAMBLE
 %DIF PREAMBLE
                      %DIF PREAMBLE
%DIF SAFE PREAMBLE %DIF PREAMBLE
 %DIF PREAMBLE
 %DIF PREAMBLE
 %DIF PREAMBLE
 %DIF PREAMBLE
%DIF FLOATSAFE PREAMBLE %DIF PREAMBLE
 %DIF PREAMBLE
 %DIF PREAMBLE
 %DIF PREAMBLE
 %DIF PREAMBLE
 %DIF PREAMBLE
 %DIF PREAMBLE
%DIF HYPERREF PREAMBLE %DIF PREAMBLE
 %DIF PREAMBLE
 %DIF PREAMBLE
%DIF END PREAMBLE EXTENSION ADDED BY LATEXDIFF

\begin{document}

\title{Exponential integrability for log-concave measures}
\author[P. Ivanisvili, R. Russell]{Paata Ivanisvili and Ryan Russell}
\thanks{}
\address{Department of Mathematics,  North Carolina State University, UC Irvine}
\email{pivanisv@uci.edu \textrm{(P.\ Ivanisvili)}}

\address{Department of Mathematics and Statistics, California State University, Long Beach}
\email{rrussell14@g.ucla.edu \textrm{(R.\ Russell)}}
\makeatletter
\@namedef{subjclassname@2010}{
  \textup{2010} Mathematics Subject Classification}
\makeatother
\subjclass[2010]{26D10, 42B35, 35E10}
\keywords{}
\begin{abstract} 
Talagrand showed that finiteness of $\mathbb{E}\,  e^{\frac{1}{2}|\nabla f(X)|^{2}}$ implies finiteness of $\mathbb{E}\,  e^{f(X)}$ where $X$ is the standard Gaussian vector in $\mathbb{R}^{n}$ and $f$ is a smooth function with zero average. However, in this paper we show that finiteness of $ \mathbb{E}\,  e^{\frac{1}{2}|\nabla f|^{2}} (1+|\nabla f|)^{-1}$ implies finiteness of $\mathbb{E}\,  e^{f(X)}$, and we also obtain quantitative bounds  
 \begin{align*}
\log\, \mathbb{E}\,  e^{f} \leq 10\,  \mathbb{E}\,  e^{\frac{1}{2}|\nabla f|^{2}} (1+|\nabla f|)^{-1}.
 \end{align*} 
Moreover, the extra factor $(1+|\nabla f|)^{-1}$ is the best possible  in the sense that there is  smooth $f$ with  $\mathbb{E}\,  e^{f} =\infty$
  but  $\mathbb{E}\,  e^{\frac{1}{2}|\nabla f|^{2}} (1+|\nabla f|)^{-c}<\infty$ for all $c>1$. As an application we show corresponding dual inequalities for the discrete time dyadic martingales and its quadratic variations.
\end{abstract}
\maketitle

\section{Introduction}

In \cite{BG}, Bobkov--G\"otze showed that  for a smooth  $f:\mathbb{R}^{n} \to \mathbb{R}$  with $\mathbb{E} f(X)=0$ we have

\begin{align}\label{exp}
\mathbb{E}\,  e^{f(X)} \leq \left(\mathbb{E}\,  e^{\alpha |\nabla f(X)|^{2}}  \right)^{\frac{1}{2\alpha-1}} \quad \text{for any} \quad \alpha >1/2, 
\end{align}
for a class of random vectors $X$ in $\mathbb{R}^{n}$ satisfying log-Sobolev inequality {\em with constant 1}. In particular, the estimate (\ref{exp}) holds true when   $X\sim \mathcal{N}(0, \mathrm{I_{n\times n}})$ is the standard  Gaussian vector in $\mathbb{R}^{n}$  and $\mathrm{I_{n \times n}}$ is the identity matrix. The inequality  implies the measure concentration phenomena $\mathbb{P}(f(X)> \lambda) \leq e^{-\lambda^{2}/2}$ for all $\lambda \geq 0$ provided that $|\nabla f|\leq 1$ and $\mathbb{E}\,  f(X)=0$.  In~\cite{BG} it was  asked what happens in the endpoint case when $\alpha=1/2$, i.e.,  does finiteness of $\mathbb{E} \, e^{\frac{|\nabla f(X)|^{2}}{2}}$  imply finiteness of $\mathbb{E}\,  e^{f(X)}$ even for $n=1$ and $X \sim \mathcal{N}(0,1)$? 

 Notice that the  finiteness of $\mathbb{E} \, e^{\beta |\nabla f(X)|^{2}}$ for some $\beta \in (0, 1/2)$ does not imply finiteness of $\mathbb{E} \, e^{f(X)}$ (consider $X \sim \mathcal{N}(0,1)$ and $f(x) = \frac{x^{2}-1}{2}$). Therefore, perhaps  
 $$
 \mathbb{E}\,  e^{f(X)} < h\left(\mathbb{E}\,  e^{\frac{1}{2}|\nabla f(X)|^{2}}\right)
 $$
 is the ``best possible'' inequality one may ask  for some $h : [1, \infty) \to [0, \infty)$. 

It is not hard to see (see~\cite{BG}) that  Bobkov--G\"otze's exponential inequality (\ref{exp}) is optimal in terms of the powers, i.e., one cannot replace $\frac{1}{2\alpha-1}$ with $\frac{1}{c \alpha-1}$ for some $c<2$, and one cannot replace $e^{\alpha |\nabla f|^{2}}$ with $e^{c \alpha |\nabla f|^{2}}$ for some $c<1$. 
 
 According to a  discussion on page 8  in \cite{BG} Talagrand showed that even though (\ref{exp})  ``fails'' at the endpoint exponent $\alpha=\frac{1}{2}$, surprisingly,  the finiteness of  $\mathbb{E} \, e^{\frac{1}{2} |\nabla f(X)|^{2}}$ still implies finiteness of  $\mathbb{E}\,  e^{f}$ for $X \sim \mathcal{N}(0,\mathrm{I_{n \times n}})$.  We are not aware of Talagrand's proof as it was never published, we do not know if he solved the problem only for $n=1$ or for all $n\geq 1$. 
 
 In this paper we show  that the finiteness of $\mathbb{E}\,  e^{\frac{1}{2}|\nabla f|}(1+|\nabla f(X)|)^{-1}$ implies the finiteness of $\mathbb{E}\,  e^{f(X)}$ for all $n \geq 1$, and the extra factor $(1+|\nabla f|)^{-1}$ is the best possible in the sense that it cannot be replaced by $(1+|\nabla f|)^{-c}$ for some $c>1$. Moreover, we provide quantitive bounds.
 \begin{theorem}\label{mthii}
 For any $n\geq 1$ we have  
 \begin{align}\label{oo1}
 \log\, \mathbb{E}\,  e^{f(X)-\mathbb{E}f} \leq 10\,  \mathbb{E}\,  e^{\frac{1}{2}|\nabla f(X)|^{2}} (1+|\nabla f(X)|)^{-1}  
 \end{align} 
for all $f \in C^{\infty}_{0}(\mathbb{R}^{n})$ where $X \sim \mathcal{N}(0, \mathrm{I_{n \times n}})$. 
 \end{theorem}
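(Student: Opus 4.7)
My plan is to run a Herbst-type argument for the Gaussian log-Sobolev inequality and to produce the factor $(1+|\nabla f|)^{-1}$ through a sharp pointwise comparison. First, by replacing $f$ with $f-\mathbb{E} f$, I reduce to the case $\mathbb{E} f=0$; both sides of (\ref{oo1}) are invariant under this shift. Set $\Lambda(\lambda)=\log\mathbb{E}\,e^{\lambda f}$ for $\lambda\in[0,1]$. Applying the Gaussian log-Sobolev inequality (constant $1$) to $g=e^{\lambda f/2}$ yields the classical Herbst differential inequality
\begin{equation*}
\lambda\,\Lambda'(\lambda)-\Lambda(\lambda)\;\leq\;\tfrac{\lambda^{2}}{2}\,\mathbb{E}_\lambda|\nabla f|^{2},\qquad \mathbb{E}_\lambda[\,\cdot\,]:=\frac{\mathbb{E}[\,\cdot\,e^{\lambda f}]}{\mathbb{E}\,e^{\lambda f}}.
\end{equation*}
Dividing by $\lambda^{2}$, integrating on $(0,1]$, and using $\Lambda(\lambda)/\lambda\to\mathbb{E} f=0$, I obtain
\begin{equation*}
\log\mathbb{E}\,e^{f}\;\leq\;\tfrac{1}{2}\int_{0}^{1}\mathbb{E}_\lambda\,|\nabla f|^{2}\,d\lambda.
\end{equation*}

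Second, a short calculus check shows that $\sup_{y\geq 0}y^{2}(1+y)e^{-y^{2}/2}$ is a finite absolute constant, so $\tfrac{1}{2}y^{2}\leq c_{0}\,H(y)$ with $H(y)=e^{y^{2}/2}/(1+y)$ and some absolute $c_{0}$ of order $1$. Inserted inside the tilted expectation, this gives $\log\mathbb{E} e^{f}\leq c_{0}\int_{0}^{1}\mathbb{E}_\lambda\,H(|\nabla f|)\,d\lambda$. Third, to pass from the tilted expectations back to the Gaussian expectation, I use the convexity estimate $e^{\lambda f}\leq\lambda e^{f}+(1-\lambda)$ together with the Jensen bound $\mathbb{E}\,e^{\lambda f}\geq e^{\lambda\mathbb{E} f}=1$, obtaining
\begin{equation*}
\log\mathbb{E}\,e^{f}\;\leq\;\tfrac{c_{0}}{2}\,\mathbb{E}\bigl[H(|\nabla f|)\,e^{f}\bigr]\;+\;\tfrac{c_{0}}{2}\,\mathbb{E}\,H(|\nabla f|).
\end{equation*}

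The remaining step -- which I expect to be the main obstacle -- is to control the cross term $\mathbb{E}\bigl[H(|\nabla f|)\,e^{f}\bigr]$ by $\mathbb{E}\,H(|\nabla f|)$ (times a manageable function of $\mathbb{E}\,e^{f}$). Naive H\"older-type bounds immediately re-introduce quantities such as $\mathbb{E}\,e^{|\nabla f|^{2}/2}$, which can be infinite under our hypothesis; this is precisely the endpoint failure of Bobkov--G\"otze (\ref{exp}) that the factor $(1+|\nabla f|)^{-1}$ is meant to repair. I anticipate a Gaussian integration-by-parts or Ornstein--Uhlenbeck semigroup argument, exploiting the polynomial decay provided by $(1+|\nabla f|)^{-1}$, to yield a self-consistency inequality of the form $\log Z\leq\alpha+\beta Z$ with $\alpha,\beta\lesssim\mathbb{E}\,H(|\nabla f|)$, from which (\ref{oo1}) with the stated universal constant $10$ follows after a careful optimization of constants.
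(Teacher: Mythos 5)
Your first two steps are correct: the Herbst differential inequality $\lambda\Lambda'(\lambda)-\Lambda(\lambda)\le\frac{\lambda^2}{2}\mathbb{E}_\lambda|\nabla f|^2$ and the integration giving $\log\mathbb{E}e^{f}\le\frac12\int_0^1\mathbb{E}_\lambda|\nabla f|^2\,d\lambda$ are standard, and the pointwise bound $\frac12 y^2\le c_0 e^{y^2/2}(1+y)^{-1}$ holds with $c_0<1$. But the argument is not a proof: the decisive step --- controlling the tilted expectation $\mathbb{E}_\lambda H(|\nabla f|)$ by the untilted one --- is exactly where the endpoint difficulty lives, and you leave it as an anticipated lemma. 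Worse, the endgame you sketch cannot work as stated. A self-consistency inequality $\log Z\le\alpha+\beta Z$ pins down $Z$ only when $\beta<1/e$; for $\beta\ge 1/e$ the inequality $\log Z\le\beta Z$ holds for \emph{every} $Z>0$ and carries no information. In your scheme $\beta\asymp\mathbb{E}\,H(|\nabla f|)$ with $H(y)=e^{y^2/2}(1+y)^{-1}$, and since $H(y)\ge H\bigl(\tfrac{\sqrt5-1}{2}\bigr)\approx 0.748$ for all $y\ge0$, the coefficient $\beta$ is bounded below by an absolute constant of the same order as $1/e$ and exceeds it for generic $f$. So even granting the cross-term bound $\mathbb{E}[H(|\nabla f|)e^f]\lesssim\mathbb{E}H(|\nabla f|)\cdot\mathbb{E}e^f$, the resulting inequality would be vacuous. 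Any fix must exploit cancellation between the tilt $e^{\lambda f}$ and the gradient term far more delicately than H\"older or convexity allows --- which is precisely the obstruction that defeats the Bobkov--G\"otze argument at $\alpha=1/2$.

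The paper avoids tilted measures entirely. It constructs an explicit convex function $F$ from the Gaussian function $k=-\log(\Phi'/\Phi)$ via $F(k'(t))=\int_{-\infty}^{t}k''e^{k}$, shows that $M(x,y)=\log x+F(y/x)$ satisfies a degenerate Monge--Amp\`ere-type matrix inequality (the modified Hessian in (\ref{kakun}) is positive semidefinite with zero determinant), and runs a heat-flow interpolation $s\mapsto U_sM(U_{1-s}g,\sqrt{s}|\nabla U_{1-s}g|)$ whose monotonicity yields $\log\mathbb{E}e^{f}-\mathbb{E}f\le\mathbb{E}F(|\nabla f|)$; the constant $10$ then comes from the pointwise bound $F(x)\le 10\,e^{x^2/2}(1+x)^{-1}$ of Lemma~\ref{poinaa}. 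If you want to pursue a semigroup route, that monotone-quantity argument, rather than Herbst plus a self-consistency inequality, is the mechanism that actually closes at the endpoint.
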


 To see the sharpness of the factor $(1+|\nabla f|)^{-1}$ in (\ref{oo1}) let $n=1$, and let $f(x) = \frac{x^{2}}{2}$.   Then $\mathbb{E} \, e^{f(X)}  = \infty$. On the other hand, $\mathbb{E}\,  e^{\frac{1}{2}|\nabla f(X)|^{2}} (1+|\nabla f(X)|)^{-c}  = \frac{1}{\sqrt{2\pi}} \int_{\mathbb{R}} \frac{dx}{(1+|x|)^{c}} <\infty$ for all $c>1$.  It remains to multiply $f$ by a smooth cutoff function $\mathbbm{1}_{|x|\leq R}$ and take the limit $R \to \infty$.

 Using standard mass transportation arguments the exponential integrability (\ref{oo1}) may be extended to random vectors $X$ having  log-concave densities. 
 \begin{corollary}\label{sledoval}
 Let $X$ be an arbitrary random vector in $\mathbb{R}^{n}$ with density $e^{-u(x)}dx$ such that  $\mathrm{Hess}\, u \geq R\,  \mathrm{I_{n \times n}}$ for some $R>0$. Then 
 \begin{align}\label{coriu}
  \log\, \mathbb{E}\,  e^{f(X)-\mathbb{E}f} \leq 10\,  \mathbb{E}\,  e^{\frac{1}{2R}|\nabla f(X)|^{2}} (1+R^{-1/2}|\nabla f(X)|)^{-1}
 \end{align}
 holds for all $f \in C_{0}^{\infty}(\mathbb{R}^{n})$.
 \end{corollary}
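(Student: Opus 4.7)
The plan is to apply Theorem~\ref{mthii} through a mass transportation argument. First, I would invoke Caffarelli's contraction theorem on the rescaled random vector $\sqrt{R}\,X$: its density has potential whose Hessian is bounded below by $I_{n\times n}$, so the Brenier optimal transport map $S$ from $\mathcal{N}(0, I_{n\times n})$ to the law of $\sqrt{R}\,X$ is $1$-Lipschitz. Equivalently, $T(y) := S(\sqrt{R}\,y)/\sqrt{R}$ is $1$-Lipschitz and pushes $Y \sim \mathcal{N}(0, R^{-1} I_{n\times n})$ forward to the law of $X$.

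Next I would record the natural rescaling of Theorem~\ref{mthii} to the Gaussian $\mathcal{N}(0, R^{-1}I_{n\times n})$. Applying the theorem to $\tilde h(z) := h(z/\sqrt{R})$ with $Z = \sqrt{R}\,Y \sim \mathcal{N}(0, I_{n\times n})$ yields, for any $h \in C_0^\infty(\mathbb{R}^n)$,
\[
\log \mathbb{E}\, e^{h(Y) - \mathbb{E}\, h} \;\leq\; 10\, \mathbb{E}\, \frac{e^{|\nabla h(Y)|^{2}/(2R)}}{1 + R^{-1/2}|\nabla h(Y)|}.
\]
I would then apply this to $g := f \circ T$. Because $T(Y) \stackrel{d}{=} X$, the left-hand side equals $\log \mathbb{E}\, e^{f(X) - \mathbb{E}\, f}$, and the chain rule together with $\|\nabla T(y)\|_{\mathrm{op}} \leq 1$ yields $|\nabla g(y)| \leq |\nabla f(T(y))|$ almost everywhere. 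What remains is to compare $\phi(|\nabla g(Y)|)$ with $\phi(|\nabla f(T(Y))|) = \phi(|\nabla f(X)|)$ in expectation, where $\phi(t) := e^{t^2/(2R)}/(1 + t/\sqrt{R})$.

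The hard part is this last comparison, because $\phi$ is not monotone: it decreases from $\phi(0) = 1$ to a minimum of approximately $0.748$ near $t/\sqrt{R} = (\sqrt{5}-1)/2$ before growing to infinity, so the naive pointwise bound $\phi(u) \leq \phi(v)$ for $u \leq v$ fails. However, $\phi$ is bounded below by a universal positive constant $\phi_{\min}$, so $\phi(u) \leq (\phi(0)/\phi_{\min})\,\phi(v) \leq 1.34\,\phi(v)$ whenever $0 \leq u \leq v$; this harmless multiplicative loss is absorbed into the universal prefactor from Theorem~\ref{mthii}. A secondary technicality is that $T$ is only Lipschitz and $g$ need not be compactly supported, so Theorem~\ref{mthii} is first applied to a mollification $T_\epsilon$ and to $g$ times a smooth cutoff, after which one passes to the limit by dominated convergence (using $|g| \leq \|f\|_\infty$).
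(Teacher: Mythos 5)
Your overall strategy is the same as the paper's: Caffarelli's contraction theorem supplies a transport map from a Gaussian onto the law of $X$ with the right Lipschitz control, and the Gaussian inequality is then pulled back along that map (your two-step version, rescaling to $\mathcal{N}(0,R^{-1}\mathrm{I})$ and using a $1$-Lipschitz Brenier map, composes to exactly the single map $\nabla\psi$ with $0\le\mathrm{Hess}\,\psi\le R^{-1/2}\mathrm{I}$ that the paper uses). The smoothness/cutoff technicalities you flag are real but routine, and the paper treats them no more carefully than you do.

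There is, however, one genuine gap: your handling of the non-monotonicity of $\phi(t)=e^{t^{2}/(2R)}(1+t/\sqrt{R})^{-1}$ does not recover the stated constant $10$. You correctly observe that $\phi(u)\le\phi_{\min}^{-1}\phi(v)$ for $0\le u\le v$ with $\phi_{\min}\approx 0.748$, i.e.\ a loss of about $1.337$; but $10\times 1.337\approx 13.4$, and even if you reach inside Theorem~\ref{mthii} for the sharper internal constant $3\sqrt{2\pi}\approx 7.52$ from Lemma~\ref{poinaa}, you get $7.52\times 1.337\approx 10.05>10$. So the loss is not ``absorbed'': your argument proves the corollary only with a worse constant, hence not the statement as written. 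The paper sidesteps this entirely by applying the comparison one level earlier: it uses the intermediate inequality (\ref{samotxe}), $\log\int e^{f}d\gamma_{n}\le\int f\,d\gamma_{n}+\int F(|\nabla f|)\,d\gamma_{n}$, where $F$ itself \emph{is} increasing ($F'(k'(t))=e^{k(t)}>0$ by Lemma~\ref{prop01}), so $F(|\nabla(h\circ\nabla\psi)|)\le F\bigl(R^{-1/2}|\nabla h(\nabla\psi)|\bigr)$ pointwise with no loss, and only \emph{then} invokes the majorant $F(x)\le 10\,e^{x^{2}/2}(1+x)^{-1}$. To repair your proof, run the monotone comparison on $F$ (or equivalently on the conclusion (\ref{fin99}) of the heat-flow argument) rather than on its non-monotone upper bound.
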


Exponential integrability has been studied for other random vectors $X$ as well. Let us briefly record some known results where we  assume $f$ to be real valued with $\mathbb{E} f(Y)=0$. In all examples $Y$ is uniformly distributed on the set where it is given. 

\begin{align}
& \log \, \mathbb{E}\,  e^{f(Y)} \leq \mathbb{E}\, \frac{1}{4}|\nabla_{\mathbb{S}^{2}} f(Y)|^{2},   \qquad Y \in \mathbb{S}^{2}=\{\|x\|=1, x \in \mathbb{R}^{3}\},  \label{Trud}\\
& \log \, \mathbb{E}\,  e^{f(Y)} \leq 1+ \mathbb{E}\frac{1}{16}|\nabla f(Y)|^{2},   \quad  Y \in \mathbb{D}=\{\|x\|\leq 1, x \in \mathbb{R}^{2}\}, \label{Trud2}\\
& \log \, \mathbb{E} \, e^{f(Y)} \leq \log \,  \mathbb{E}\, e^{D(f)^{2}(Y)},   \qquad  \quad Y \in  \{-1,1\}^{n}, \label{Bobk}\\
& \log \, \mathbb{E} \, e^{f(Y)} \leq \log \,  \mathbb{E}\, e^{4|\nabla f|^{2}(Y)},   \qquad  \quad Y \in  [-1,1]^{n}, \quad \text{(only for convex $f$),} \label{Bobk2}
\end{align}
where in (\ref{Bobk}) by the symbol $D(f)^{2}$ we denote ``discrete gradient'' (see \cite{BG}). The estimate (\ref{Trud}), also known as Mozer--Trudinger inequality (with the best constants due to Onofri), has been critical for geometric applications \cite{MOS11, MOS12}. The exponential integrability  (\ref{Trud2}) is due to Carleson--Chang~\cite{CAR}. The estimates (\ref{Bobk}) and (\ref{Bobk2}) are due to Bobkov--G\"otze \cite{BG}. A slightly weaker version of (\ref{Bobk}), namely, $\mathbb{E}\, e^{f(Y)} \leq  \mathbb{E}\, e^{\frac{\pi^{2}}{8} D(f)^{2}(Y)}$ was obtained by Ben-Efraim--Lust-Piquard~\cite{Lust}.

The proof of the main theorem uses {\em heat--flow} arguments. We construct a certain increasing quantity $A(s)$  with respect to a parameter $s \in [0, 1]$. We will see that 
\begin{align*}
\log \mathbb{E}\, e^{f(X)} = A(0) \leq A(1)  \leq \mathbb{E}\, f(X)+ 10\, \mathbb{E}\,  e^{\frac{1}{2}|\nabla f(X)|^{2}} (1+|\nabla f(X)|)^{-1}. 
\end{align*}
To describe the expression for $A(t)$,  let $\Phi(t)=\mathbb{P}(X_{1}\leq t)$ be the Gaussian cumulative distribution function, and set $k(x) = - \log \frac{\Phi'(t)}{\Phi(t)} $.  Our main object will be a certain function $F: [0, \infty) \to [0, \infty)$ defined as
\begin{align}\label{ode2}
F(x) = \int_{0}^{x}e^{k((k')^{-1}(t))}dt \quad  \text{for all} \quad  x \in [0,\infty), 
\end{align}
where $(k')^{-1}$ is the inverse function to $k'$ (it will be explained in the next section why $F$ is well defined). 
For $g :\mathbb{R}^{n} \to (0, \infty)$ we consider its  heat flow  $U_{s} g(y) := \mathbb{E} g(y  + \sqrt{s} X)$ where $s \in [0,1]$. Then 
\begin{align*}
A(s) := U_{s}\left[ \log\,  U_{1-s} g  + F\left( \frac{\sqrt{s}\,  |\nabla U_{1-s} g|}{U_{1-s} g}\right) \right](0)
\end{align*}
will have the desired properties: $A'(s) \geq  0$, $A(0) = \log\,  \mathbb{E}\,  g$, and $A(1) = \mathbb{E}\, \log g+ \mathbb{E} F(\frac{|\nabla g|}{g})$. The argument gives the inequality 

\begin{align}\label{fin999}
\log \mathbb{E} g - \mathbb{E} \log g \leq \mathbb{E}\, F\Big(\frac{|\nabla g|}{g}\Big). 
\end{align}
If we set $g(x)= e^{f(x)}$ with $f :\mathbb{R}^{n} \to \mathbb{R}$ and use the chain rule $\frac{|\nabla g|}{g} = |\nabla f|$ we obtain
\begin{align}\label{fin99}
\log\,  \mathbb{E}\, e^{f-\mathbb{E}f}\leq \mathbb{E}\, F(|\nabla f|). 
\end{align}
The last step is to show the pointwise estimate $F(s) \leq 10\, e^{\frac{s^{2}}{2}}(1+s)^{-1}$ for all $s\geq 0$. We remark that the obtained inequality (\ref{fin99}) is stronger than (\ref{oo1}) and it should be considered as a corollary of (\ref{fin99}), however, due to a complicated expression for $F$ we decided to state the main result in the form (\ref{oo1}).

The computation of $A'(s)$ is technical and it is done in Section~\ref{bolo88}, where we also explain how the expression $A(t)$ was ``discovered''. We should note that the main reason that makes $A' \geq 0$ is the fact that $k'/k''>0$, and the inequality 
\begin{align*}
1-k''-k'e^{k} \geq 0,
\end{align*}
which for  $k= - \log \frac{\Phi'(t)}{\Phi(t)}$ serendipitously turns out to be equality. 

Sections~\ref{oney} and \ref{Mong} are technical and can be skipped when reading the paper for the first time. In these sections we show that $F \in C^{2}([0, \infty))$ is an increasing convex function with values  $F(0)=F'(0)=0$, $F''(0)=1$. Furthermore,  the ``modified'' hessian matrix of
\begin{align}\label{mfun}
M(x,y) := \log x + \int_{0}^{y/x} e^{k((k')^{-1}(t))}dt
\end{align}
  is positive semidefinite 
\begin{align}\label{kakun}
\begin{pmatrix}
M_{xx}+\frac{M_{y}}{y} & M_{xy} \\
M_{xy} &  M_{yy}
\end{pmatrix} \geq 0 \quad \text{for all} \quad (x,y) \in (0, \infty)\times [0, \infty).
\end{align}
In Section~\ref{bolo88} we demonstrate that the condition (\ref{kakun}) implies the  inequality 
\begin{align}\label{eshe}
 M(\mathbb{E}g(X), 0)\leq \mathbb{E} M(g(X), |\nabla g(X)|)
\end{align}
 for all smooth bounded $g :\mathbb{R}^{n} \to (0, \infty)$.  At the end of Section~\ref{bolo88}, we deduce Theorem~\ref{mthii} and Corollary~\ref{sledoval} from (\ref{eshe}).

As an application  in Section~\ref{prim} we show that the dual inequality to (\ref{fin999}), in the sense of duality described in Section 3.2 of \cite{INV2}, 
corresponds to 
\begin{theorem}\label{mt3}
For any positive martingale $\{\xi_{n}\}_{n \geq 0}$ on a probability space $([0,1], \mathcal{B}, dx)$ adapted to a discrete time dyadic  filtration $([0,1), \emptyset) =\mathcal{F}_{0}\subset \mathcal{F}_{1}\subset \ldots$ such that $\xi_{N}=\xi_{N+1}=\ldots=\xi_{\infty}>0$ for a sufficiently large $N$, we have  
\begin{align}\label{martin}
\log \mathbb{E} \xi_{\infty} - \mathbb{E} \log \xi_{\infty}  \leq \mathbb{E}\, G\Big(\frac{\xi_{\infty}}{[\xi_{\infty}]^{1/2}}\Big), 
\end{align}
where  $[\xi_{\infty}] = \sum_{k \geq 0} (\xi_{k+1}-\xi_{k})^{2}$ is the quadratic variation, and $G(t) := \int_{t}^{\infty}\int_{s}^{\infty}r^{-2}e^{\frac{s^{2}-r^{2}}{2}}drds$.
\end{theorem}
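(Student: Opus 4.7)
The plan is a Bellman-function telescoping argument on the dyadic filtration, dual to the heat-flow proof of Theorem~\ref{mthii}. Define $B : (0, \infty) \times [0, \infty) \to \mathbb{R}$ by
\[
B(x, y) := \log x + G\bigl(x/\sqrt{y}\bigr) \quad (y > 0), \qquad B(x, 0) := \log x.
\]
A direct computation from the defining double integral yields the equivalent ``Gaussian'' representation
\[
B(x, y) = \tfrac{1}{2}\, \mathbb{E}_Z\, \log(x^2 + y Z^2), \qquad Z \sim \mathcal{N}(0, 1),
\]
which makes continuity across $y = 0$ transparent since $G(t) \to 0$ as $t \to \infty$.

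The central claim is the dyadic concavity inequality
\[
2 B(x, y) \;\leq\; B(x + a, y + a^2) + B(x - a, y + a^2) \qquad (x > |a| \geq 0, \ y \geq 0). \qquad (\star)
\]
Granting $(\star)$, the proof closes immediately by telescoping. On every atom $I$ of $\mathcal{F}_n$ the pair $(\xi_n, [\xi_n])|_I$ is a constant $(x, y)$, and on the two dyadic halves of $I$ it takes the values $(x \pm a, y + a^2)$ with equal conditional probability. Hence $\mathbb{E}[B(\xi_{n+1}, [\xi_{n+1}]) \mid \mathcal{F}_n] \geq B(\xi_n, [\xi_n])$, so $n \mapsto B(\xi_n, [\xi_n])$ is a submartingale. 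Telescoping from $n = 0$ (where $B(\xi_0, 0) = \log \mathbb{E}\xi_\infty$) to $n = N$ (where the martingale has stabilised at $\xi_\infty$ with quadratic variation $[\xi_\infty]$) gives $\log \mathbb{E}\xi_\infty \leq \mathbb{E}\log \xi_\infty + \mathbb{E}\, G(\xi_\infty / \sqrt{[\xi_\infty]})$, which is (\ref{martin}).

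The main obstacle is the verification of $(\star)$. Differentiating twice in the double integral one obtains the ODE $G''(t) - t G'(t) = t^{-2}$, and substituting this into the Hessian of $B$ reveals the degenerate parabolic identity $B_{xx} + 2 B_y \equiv 0$ on the interior. This is the dyadic counterpart --- in the sense of the duality described in \cite[Section~3.2]{INV2} --- of the Gaussian modified-Hessian condition (\ref{kakun}) satisfied by $M$ of (\ref{mfun}), the weight $1/y$ there being replaced by the constant $2$ here. Because the identity $B_{xx} + 2 B_y = 0$ is sharp, the naive pointwise second-order test gives only an equality to leading order in $a$, and a global argument is needed. Using $B = \tfrac{1}{2}\mathbb{E}\log(x^2 + y Z^2)$ together with the coupling $\sqrt{y + a^2}\, Z \stackrel{d}{=} \sqrt{y}\, Z_1 + a Z_2$ for independent $Z_1, Z_2 \sim \mathcal{N}(0, 1)$, $(\star)$ rewrites as an integral inequality for expected logarithmic moduli on $\mathbb{C}$, which we plan to establish by exploiting the subharmonicity of $\log|\cdot|$ together with the vanishing first complex moment of the symmetric jump distribution $(\pm a, a Z_2)$. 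The cleanest route, however, is to invoke the duality framework of \cite{INV2} directly, which transfers (\ref{kakun}) to $(\star)$ without the hand calculation.
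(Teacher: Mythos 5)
Your overall architecture coincides with the paper's: prove the four-point inequality $(\star)$ (this is exactly (\ref{utol9}) for $N(p,t)=\log p+G(p/\sqrt{t})$) and then telescope it over the dyadic filtration, using the boundary value $N(p,0)=\log p$. The telescoping step is correct, and your identity $\log x+G(x/\sqrt{y})=\tfrac12\,\mathbb{E}_Z\log(x^2+yZ^2)$ is true and genuinely nice: one checks $\mathbb{E}[(t^2+Z^2)^{-1}]=t^{-1}e^{t^2/2}\int_t^\infty e^{-v^2/2}dv$, whence $\tfrac{d}{dt}\,\tfrac12\mathbb{E}\log(1+Z^2/t^2)=-t^{-1}+e^{t^2/2}\int_t^\infty e^{-v^2/2}dv=G'(t)$, and both sides vanish at $t=\infty$. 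It also explains transparently why $N$ solves the backward heat equation, since $\log|x+iw|$ is harmonic.

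The gap is the proof of $(\star)$ itself, which is the whole content of the theorem, and neither of your two proposed routes closes it. First, subharmonicity of $\log|\cdot|$ plus the vanishing first complex moment of the jump $V=a\varepsilon+iaZ_2$ cannot suffice: writing $\log|w+V|=\log|w|+\Re\sum_{k\ge1}\tfrac{(-1)^{k+1}}{k}(V/w)^k$ one finds $\mathbb{E}V=\mathbb{E}V^2=\mathbb{E}V^3=0$ but $\mathbb{E}V^4=-2a^4$, so the leading term of $\mathbb{E}\log|w+V|-\log|w|$ is $\tfrac{a^4}{2}\Re(w^{-4})$, which changes sign. Only after averaging over $Z_1$ (i.e.\ over $w=x+i\sqrt{y}Z_1$) does this become $-\tfrac13 N_{tt}\,a^4+o(a^4)$, so $(\star)$ genuinely requires the concavity of $t\mapsto N(p,t)$ --- a nontrivial computation (the paper's analysis of $1+(s^3+3s)G'(s)$) that your sketch never supplies. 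And even with $N_{tt}\le0$ in hand, the fourth-order infinitesimal statement does not yield the global inequality; the paper globalizes by running $X_s=N(p+B_s,t+s)$, a martingale by the backward heat equation, stopping at $\tau=\inf\{s:|B_s|=a\}$, and using Jensen in $t$ together with $\mathbb{E}(\tau\mid B_\tau=\pm a)=a^2$. Second, the duality of \cite{INV2} goes the wrong way for you: even heuristically (and the paper's concluding remarks stress it holds only ``under additional assumptions''), it would transfer the four-point inequality for $N$ \emph{to} the four-point inequality (\ref{two-pp1}) for $M$, not transfer the infinitesimal Hessian condition (\ref{kakun}) on $M$ to $(\star)$. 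So you still owe (a) a proof that $N_{tt}\le0$ and (b) a globalization argument; with those supplied, for instance as in the paper, the remainder of your argument is sound.
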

In Lemma~\ref{mokla1} we obtain the two sided estimate
\begin{align*}
\frac{1}{3} \log(1+t^{-2})\leq G(t) \leq \log(1+t^{-2}) \quad \text{for all} \quad t\geq 0.
\end{align*}
In particular, (\ref{martin}) implies 
\begin{align}\label{sled2}
\log \mathbb{E} \xi_{\infty} - \mathbb{E} \log \xi_{\infty} \leq \mathbb{E} \log\left( 1+\frac{[\xi_{\infty}]}{\xi_{\infty}^{2}}\right).
\end{align}
The estimate (\ref{sled2}) shows how well $\log \xi_{\infty}$ is concentrated around $\log \mathbb{E} \xi_{\infty}$  provided that  one can control the quadratic variation of $\xi_{\infty}$.  Theorem~\ref{mt3}  posits a duality approach developed in \cite{INV2}. This may be considered as complementary to the $e$-entropy bound  $\mathbb{E}e^{\xi_{\infty}- \mathbb{E} \xi_{\infty}} \leq \frac{e^{-\varepsilon}}{1-\varepsilon}$ which holds for all discrete time simple martingales $\xi_{n}$ (not necessarily positive and dyadic) provided that 
$[\xi_{\infty}] \leq \varepsilon^{2}$, see Corollary 1.12 in \cite{stolyar}. 

The proof of (\ref{martin})  uses the special  function 
$$
N(p,t) := \log(p) + \int_{p/\sqrt{t}}^{\infty}\int_{s}^{\infty}r^{-2}e^{\frac{s^{2}-r^{2}}{2}}drds
$$
  which we find by dualizing  $M(x,y)=\log x +F(y/x)$. We unfurl that $N$ is {\em heat convex}, i.e., 
\begin{align}\label{hcon}
2N(p,t) \leq N(p+a, t+a^{2})+N(p-a, t+a^{2})
\end{align}
holds for all reals $p,a,t$ such that $p\pm a \geq 0$ and $t\geq 0$. Finally, we reveal that after iterating (\ref{hcon}) we reap (\ref{martin}).

\section{Proof of Theorem~\ref{mthii} and Copollary~\ref{sledoval}}
\subsection{Step 1. An implicit function $F$ and its properties}\label{oney}
Let
\begin{align*}
k(x) := - \log\,  (\log\, \Phi(x))' =  \frac{x^{2}}{2}+\log\, \left( \int_{-\infty}^{x} e^{-\frac{s^{2}}{2}}ds\right)\quad \text{for all} \quad x \in \mathbb{R}. 
\end{align*} 
Define a real valued function $F$ implicitly as 
\begin{align}\label{fordf}
F(k'(t))  = \int_{-\infty}^{t}k''(s) e^{k(s)} ds \quad \text{for all} \quad t \in \mathbb{R}.
\end{align}

\begin{lemma}\label{prop01}
We have 
\begin{itemize}
\item[1.] $k'(-\infty)=0$; $k'(x) \sim x$ as $x \to \infty$; $k''>0$\,  \textup{(}and hence $k'>0$\textup{)}; 
\item[2.] $F :[0, \infty) \to [0,\infty)$; $F(0)=F'(0)=0$, $F''(0)=1$; $F'(k')=e^{k}$;  $F''(k') = \frac{k'}{k''} e^{k}$. 
\end{itemize}

\end{lemma}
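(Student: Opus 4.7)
The plan is to reduce every assertion to an identity for the Gaussian ratio $r(x):=\Phi'(x)/\Phi(x)=(\log\Phi)'(x)$, since by construction $k=-\log r$. Differentiating $r=\Phi'/\Phi$ and using $\Phi''(x)=-x\Phi'(x)$ produces the two identities
\[
k'(x) = x + r(x), \qquad k''(x) = 1 - r(x)\bigl(x+r(x)\bigr) = 1 - r(x)\,k'(x),
\]
on which both items of the lemma will be built.

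For item 1, Mills' expansion $\Phi(x)\sim\Phi'(x)/|x|\,(1-1/x^{2}+\cdots)$ as $x\to-\infty$ gives $r(x)=-x-1/x+O(1/x^{3})$, so $k'(x)=-1/x+O(1/x^{3})\to 0^{+}$; as $x\to+\infty$, $r(x)\to 0$ exponentially, so $k'(x)\sim x$. The genuine obstacle is the strict convexity $k''>0$, which by the identity above is equivalent to the sharp pointwise bound $r(x)\,(x+r(x))<1$ for every $x\in\mathbb{R}$. I would prove this by introducing
\[
B(x):=\Phi(x)^{2}-\Phi'(x)\bigl(x\Phi(x)+\Phi'(x)\bigr),
\]
noting $B(-\infty)=0$, and computing $B'(x)=\Phi'(x)\bigl((1+x^{2})\Phi(x)+x\Phi'(x)\bigr)$. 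Positivity of $B'$ is obvious for $x\geq 0$; for $x<0$ it reduces (writing $y=-x$) to $(1+y^{2})(1-\Phi(y))>y\Phi'(y)$, which follows from the elementary Mills bound $1-\Phi(y)<\Phi'(y)/y$ by a one-line monotonicity argument on the difference. Once $k''>0$ is in hand, combining with $k'(-\infty)=0$ forces $k'>0$ and shows $k':\mathbb{R}\to(0,\infty)$ is a smooth bijection, so $(k')^{-1}$ is well defined on $(0,\infty)$ and $F$ is unambiguously determined by (\ref{fordf}).

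For item 2, differentiating the defining relation $F(k'(t))=\int_{-\infty}^{t}k''(s)e^{k(s)}\,ds$ once in $t$ and cancelling $k''(t)\ne 0$ yields $F'(k'(t))=e^{k(t)}$; a second differentiation gives $F''(k'(t))=k'(t)/k''(t)\cdot e^{k(t)}$. The boundary values at $0$ are then read off by letting $t\to-\infty$: $F(0)=0$ is immediate from the integral; $e^{k(t)}=1/r(t)\sim 1/|t|\to 0$ gives $F'(0)=0$; and carrying the Mills expansion one order further yields $k''(t)=1/t^{2}+O(1/t^{4})$, whence
\[
\frac{k'(t)}{k''(t)}\,e^{k(t)} \;\sim\; \frac{-1/t}{1/t^{2}}\cdot\frac{1}{|t|} \;=\; 1,
\]
giving $F''(0)=1$. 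Finally, $F'(s)=e^{k((k')^{-1}(s))}>0$ on $(0,\infty)$ together with $F(0)=0$ shows $F:[0,\infty)\to[0,\infty)$.
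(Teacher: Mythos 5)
Your argument is correct and follows essentially the same route as the paper: your $B(x)=\Phi^{2}-\Phi'(x\Phi+\Phi')$ is exactly the paper's auxiliary function $u=H^{2}-xhH-h^{2}$, your monotonicity step for $x<0$ is the paper's $v(x)=H+h\,\frac{x}{1+x^{2}}$ in disguise, and the asymptotic expansions at $-\infty$ and the treatment of item 2 coincide with the paper's. The only slip is that the inequality you need for $x<0$, namely $(1+y^{2})(1-\Phi(y))>y\Phi'(y)$, is the \emph{lower} Mills-type bound and does not follow from the upper bound $1-\Phi(y)<\Phi'(y)/y$ that you cite (which points the wrong way); however, the one-line monotonicity argument on the difference that you describe does prove it directly, so nothing essential is missing.
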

\begin{proof}
Let us investigate the asymptotic behavior of $k$ and its derivatives at $x = -\infty$. Let $x<0$. For $m \geq 0,$ define $I_{m} :=e^{x^{2}/2}\int_{-\infty}^{x}e^{-s^{2}/2} s^{-m}ds$. Integration by parts reveals $I_{m} = -x^{-(m+1)}-(m+1)I_{m+2}$.  By iterating we obtain 
\begin{align*}
e^{\frac{x^{2}}{2}}\int_{-\infty}^{x}e^{-\frac{s^{2}}{2}}ds &= I_{0}=-x^{-1}+x^{-3}-3\cdot x^{-5}+3\cdot 5 \cdot  x^{-7}+3\cdot 5 \cdot 7 \cdot I_{8}\\
& = -x^{-1}+x^{-3}-3x^{-5}+O(x^{-7}) \quad \text{as} \quad x \to \infty
\end{align*}
because $|I_{8}| \leq \int_{-\infty}^{x} s^{-8}ds = O(x^{-7}).$  Thus, as $x \to -\infty$ we have 
\begin{align*}
&e^{k(x)} = I_{0} = -x^{-1}+x^{-3}-3x^{-5}+O(x^{-7}), \quad  e^{-k(x)} = -x-x^{-1}+2x^{-3}+O(x^{-5}),\\[8pt]
&k'(x) = x+e^{-k(x)}=-x^{-1}+2x^{-3}+O(x^{-5}), \quad k''(x) = 1-k'(x)e^{-k(x)} = x^{-2}+ O(x^{-4}),\\[8pt]
&k''(x)e^{k(x)}=-x^{-3} + O\left(x^{-5}\right), \quad \text{and} \quad \frac{k'(x)e^{k(x)}}{k''(x)}=1+O(x^{-2}).
 \end{align*}
The claim  $k'(x) = x + \frac{1}{e^{x^{2}/2}\int_{-\infty}^{x} e^{-s^{2}/2}ds} \sim x$ as $x \to \infty$ is trivial. Next, we show that $k''>0$.  
\begin{align*}
k'' =1- \frac{xe^{x^{2}/2}\int_{-\infty}^{x} e^{-s^{2}/2}ds +1}{(e^{x^{2}/2}\int_{-\infty}^{x} e^{-s^{2}/2}ds)^{2}}  = \frac{e^{x^{2}}}{e^{2k(x)}}\left[\left(\int_{-\infty}^{x} e^{-s^{2}/2}ds\right)^{2} - xe^{-x^{2}/2}\int_{-\infty}^{x}e^{-s^{2}/2}ds - e^{-x^{2}} \right].
\end{align*}
If we let $h(x) :=e^{-x^{2}/2}$, and $H(x) := \int_{-\infty}^{x} e^{-t^{2}/2}dt$, then it suffices to show 
\begin{align*}
u(x):=H^{2}-xhH - h^{2}>0.
\end{align*} 
Clearly $H'=h$, and $h' = -x h$. Next 
\begin{align*}
u' &= 2H h - h H + x^{2} h H - x h^{2} +2x h^{2} = H h + x^{2} h H +x h^{2}\\
&=(H + x^{2} H +x h)h = \left(H +h\frac{x}{1+x^{2}}\right) (1+x^{2})h.
\end{align*}
Let $v(x) = H +h\frac{x}{1+x^{2}}$. Then, we have  
\begin{align*}
v'(x) =h -h \frac{x^{2}}{1+x^{2}} + h  \frac{1-x^{2}}{(1+x^{2})^{2}} = \left( \frac{1}{1+x^{2}}  + \frac{1-x^{2}}{(1+x^{2})^{2}}\right)h = \frac{2}{(1+x^{2})^{2}}h >0.
\end{align*}
Since $v(-\infty)=0$ and $v'>0$, we obtain $v(x)>0$ for all $x \in \mathbb{R}$. In particular $u'>0$; taking into account that  $u(-\infty)=0$, we conclude $u(x)>0$ for all $x \in \mathbb{R}$.

To verify the second part of the lemma notice that by consider $t \to -\infty$ in (\ref{fordf}) we see $F(0)=0$. Taking the derivative in $t$ of (\ref{fordf}) and dividing both sides by $k''>0$ we obtain $F'(k) = e^{k}$. Considering $t \to -\infty$ we realize $F'(0)=0$. Taking the derivative of (\ref{fordf}) the second time we get $F''(k') = \frac{k'}{k''}e^{k}$. Since $\frac{k'}e^{k}{k''} = 1+O(x^{-2})$ as $x \to -\infty$ we obtain $F''(0)=1$. The lemma is proved.

\end{proof}

It follows that  $k'>0$ and $k' : \mathbb{R} \to [0, \infty)$. Thus, we may consider the inverse map $t \mapsto k'(t)$ denoted as $(k')^{-1} :[0, \infty) \to \mathbb{R}$. After making a change of variables in (\ref{fordf}), we write 
\begin{align*}
F(x) = \int_{-\infty}^{(k')^{-1}(x)}k''(s) e^{k(s)}ds \stackrel{s=(k')^{-1}(u)}{=} \int_{0}^{x}e^{k((k')^{-1}(u))}du, 
\end{align*}
which coincides with the expression announced in (\ref{ode2}).

\begin{lemma}\label{poinaa}
We have $F(x) \leq 10\,  e^{\frac{x^{2}}{2}}(1+x)^{-1}$ for all $x \geq 0$. 
\end{lemma}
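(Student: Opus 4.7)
The plan is to reduce the claim to a pointwise Gaussian upper bound on $F'$ and then control the resulting integral. First I will show
\begin{align*}
F'(x) \leq \sqrt{2\pi}\, e^{x^2/2} \quad \text{for every } x \geq 0.
\end{align*}
By Lemma~\ref{prop01} we have $F'(k'(t)) = e^{k(t)}$, so this bound is equivalent to $g(t) := k(t) - \tfrac{1}{2}(k'(t))^2 \leq \log \sqrt{2\pi}$ for every $t \in \mathbb{R}$. Using the identity $k''(t) = 1 - k'(t) e^{-k(t)}$ derived in Section~\ref{oney}, a direct computation gives
\begin{align*}
g'(t) = k'(t)\bigl(1 - k''(t)\bigr) = \bigl(k'(t)\bigr)^2 e^{-k(t)} \geq 0,
\end{align*}
so $g$ is nondecreasing. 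Moreover $g(t) \to \log \sqrt{2\pi}$ as $t \to +\infty$, because the explicit expansion
$g(t) = \log H(t) - t\, e^{-t^2/2}/H(t) - e^{-t^2}/(2 H(t)^2)$
has $\log H(t) \to \log \sqrt{2\pi}$ while the last two terms vanish. Hence $g \leq \log \sqrt{2\pi}$ throughout $\mathbb{R}$, as desired.

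Integrating from $0$ to $x$ yields $F(x) \leq \sqrt{2\pi}\int_0^x e^{u^2/2}\, du$. To bound the Gaussian integral I will use $\bigl(u e^{u^2/2}\bigr)' = (1+u^2) e^{u^2/2}$ and integrate by parts:
\begin{align*}
\int_0^x e^{u^2/2}\, du = \int_0^x \frac{d(u e^{u^2/2})}{1+u^2} = \frac{x\, e^{x^2/2}}{1+x^2} + \int_0^x \frac{2u^2}{(1+u^2)^2}\, e^{u^2/2}\, du.
\end{align*}
The elementary bound $2u^2/(1+u^2)^2 \leq 1/2$ (maximum at $u=1$) lets me absorb the remaining integral into $\tfrac12 \int_0^x e^{u^2/2}\, du$, giving $\int_0^x e^{u^2/2}\, du \leq \frac{2x}{1+x^2}\, e^{x^2/2}$.

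Combining the two steps yields $(1+x)\, F(x) \leq 2\sqrt{2\pi}\, \frac{x(1+x)}{1+x^2}\, e^{x^2/2}$. A short calculus check (the critical point satisfies $x^2 - 2x - 1 = 0$) shows $\max_{x \geq 0} \frac{x(1+x)}{1+x^2} = \frac{1+\sqrt{2}}{2}$, attained at $x = 1+\sqrt{2}$. Therefore $(1+x)\, F(x) \leq \sqrt{2\pi}\,(1+\sqrt{2})\, e^{x^2/2} < 10\, e^{x^2/2}$, since $\sqrt{2\pi}\,(1+\sqrt{2}) \approx 6.05$. The only substantive step is the first one; it relies crucially on the identity $1 - k'' = k' e^{-k}$, specific to this choice of $k$, which is what makes $g'$ manifestly nonnegative and thus forces $F'(x)/e^{x^2/2}$ to remain uniformly bounded on $[0,\infty)$.
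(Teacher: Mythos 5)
Your proof is correct, and it passes through exactly the same two waypoints as the paper's: the bound $F(x)\leq \sqrt{2\pi}\int_0^x e^{u^2/2}\,du$ and the estimate $\int_0^x e^{u^2/2}\,du\leq \frac{2x}{1+x^2}e^{x^2/2}$. The mechanisms you use to reach them differ, though. For the first, the paper simply notes $k'(u)=u+e^{-k(u)}\geq u$, hence $(k')^{-1}(u)\leq u$ and $F'(u)=e^{k((k')^{-1}(u))}\leq e^{k(u)}=e^{u^2/2}H(u)\leq\sqrt{2\pi}\,e^{u^2/2}$; you instead show $g=k-\tfrac12(k')^2$ is nondecreasing with limit $\log\sqrt{2\pi}$, which rests on the same identity $k''=1-k'e^{-k}$ but avoids handling the inverse function $(k')^{-1}$ explicitly. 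For the second, the paper differentiates the difference and checks a sign, while you integrate by parts and absorb the remainder via $2u^2/(1+u^2)^2\leq 1/2$; both are clean. Your final optimization of $x(1+x)/(1+x^2)$ even yields the slightly better constant $\sqrt{2\pi}(1+\sqrt2)\approx 6.05$ versus the paper's $3\sqrt{2\pi}\approx 7.52$, both comfortably below $10$.
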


\begin{proof}
Notice that $k'(u) = u+e^{-k(u)} \geq u$ (for all $u \in \mathbb{R}$) and $k''>0$. Therefore, $u \geq (k')^{-1}(u)$ for $u \geq 0$; so $k(u) \geq k((k')^{-1}(u))$ due to the fact that $k'>0$. Thus 
\begin{align*}
F(x) \leq \int_{0}^{x}e^{k(u)}du = \int_{0}^{x}e^{\frac{u^{2}}{2}}\int_{-\infty}^{u}e^{-\frac{s^{2}}{2}}ds du \leq \sqrt{2\pi} \int_{0}^{x} e^{\frac{u^{2}}{2}}du.
\end{align*}
Next, we claim the simple  chain of inequalities 
\begin{align*}
\int_{0}^{x} e^{\frac{u^{2}}{2}}du \stackrel{(A)}{\leq} \frac{2x}{1+x^{2}} e^{\frac{x^{2}}{2}} \stackrel{(B)}{\leq} \frac{3}{1+x} e^{\frac{x^{2}}{2}}.
\end{align*}
Indeed, inequality (A) follows from the fact that it is true at $x=0$, and 
\begin{align*}
\frac{d}{dx} \left( \frac{2x}{1+x^{2}} e^{\frac{x^{2}}{2}} - \int_{0}^{x}e^{\frac{u^{2}}{2}}du \right) = e^{\frac{x^{2}}{2}}\left( 1-\frac{4x^{2}}{(1+x^{2})^{2}}\right)\geq e^{\frac{x^{2}}{2}}\left( 1-\frac{4x^{2}}{(2x)^{2}}\right) =0.
\end{align*}
The inequality (B) is elementary.  Therefore, we conclude that 
\begin{align*}
F(x) \leq 3 \sqrt{2\pi} \,e^{\frac{x^{2}}{2}}(1+x)^{-1} \leq 10 \,e^{\frac{x^{2}}{2}}(1+x)^{-1} \quad \text{for all} \quad x\geq 0. 
\end{align*}
\end{proof}

\subsection{Step 2. Monge--Amp\`ere type PDE}\label{Mong} Define  
\begin{align}\label{chemif}
M(x,y) = \log x + F(y/x) \quad \text{for all} \quad (x,y) \in (0, \infty)\times [0, \infty).
\end{align}
Clearly $M \in C^{2}$ and $M_{y}(x,0)=0$, where $M_{x} = \frac{\partial M}{\partial x}$ and $M_{y} = \frac{\partial M}{\partial y}$. Next, let us consider the matrix 
\begin{align}
A(x,y):=\begin{pmatrix}
M_{xx}+\frac{M_{y}}{y} & M_{xy}\\
M_{xy} & M_{yy}
\end{pmatrix}.
\end{align}
 We claim 
\begin{lemma}
For each $(x,y) \in  (0, \infty)\times [0, \infty)$ the matrix $A(x,y)$ is positive semidefinite with $\det(A)=0$. 
\end{lemma}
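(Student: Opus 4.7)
The plan is to compute the four entries of $A(x,y)$ explicitly as functions of the single variable $t:=y/x$ and the derivatives of $F$, reduce $\det A=0$ to a one-variable identity for $F$, and then verify that identity by using Lemma~\ref{prop01} together with the ``serendipitous equality'' mentioned in the introduction, namely $e^{k}(1-k'')=k'$. Positive semidefiniteness will then follow almost for free from $\det A=0$ together with strict positivity of $M_{yy}$.

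Writing $t=y/x$ and differentiating $M=\log x+F(y/x)$, I would record
$$M_y=\frac{F'(t)}{x},\quad M_{yy}=\frac{F''(t)}{x^2},\quad M_{xy}=-\frac{F'(t)+tF''(t)}{x^2},\quad M_{xx}=\frac{-1+2tF'(t)+t^2F''(t)}{x^2},$$
and $\frac{M_y}{y}=\frac{F'(t)}{tx^2}$, the last expression interpreted on $\{y=0\}$ as its continuous limit $F''(0)/x^2=1/x^2$, which is justified by $F'(0)=0$ and $F''(0)=1$ from Lemma~\ref{prop01}. Plugging these into $\det A$ and expanding, the terms $2tF'F''$ and $t^2(F'')^2$ cancel and one is left with
$$x^6\det A\;=\;-F''(t)+\frac{F'(t)F''(t)}{t}-F'(t)^{2}.$$
So $\det A\equiv 0$ is equivalent to the ODE-type identity $F'(t)F''(t)=tF''(t)+t\,F'(t)^{2}$ for $t>0$.

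To verify this identity I would parametrize by $t=k'(s)$, which is a valid change of variables since Lemma~\ref{prop01} gives $k':\mathbb{R}\to[0,\infty)$ a strictly increasing bijection. Using $F'(k')=e^{k}$ and $F''(k')=\frac{k'}{k''}e^{k}$ from the same lemma, the identity becomes, after multiplication by $k''$,
$$e^{2k}=k'e^{k}+k''e^{2k},\qquad\text{i.e.}\qquad e^{k}(1-k'')=k'.$$
But differentiating the explicit formula $k'(x)=x+e^{-k(x)}$ gives $k''(x)=1-k'(x)e^{-k(x)}$, which rearranges to exactly the displayed equality. Hence $\det A=0$ holds identically on $(0,\infty)\times(0,\infty)$, and by continuity on the boundary $\{y=0\}$ as well.

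Finally, for positive semidefiniteness, I would note that $M_{yy}=F''(t)/x^{2}>0$ on the whole domain: on $\{y>0\}$ this follows from $F''(t)=(k'/k'')e^{k}>0$ (using $k',k''>0$ from Lemma~\ref{prop01}), and at $y=0$ we have $M_{yy}=F''(0)/x^{2}=1/x^{2}>0$. Since $\det A=0$ and one diagonal entry is strictly positive, the other diagonal entry is forced to equal $M_{xy}^{2}/M_{yy}\geq 0$, and a symmetric $2\times 2$ matrix with nonnegative diagonal and zero determinant is psd. The main obstacle is purely the algebraic reduction of $\det A=0$ to the identity $e^{k}(1-k'')=k'$; once one parametrizes through $t=k'(s)$ and uses the formulas from Lemma~\ref{prop01}, the cancellations are forced, and the vanishing of the determinant is really a reflection of the log-Sobolev constant being exactly $1$ for the standard Gaussian.
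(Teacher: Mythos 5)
Your proposal is correct and follows essentially the same route as the paper: the same partial-derivative computations in the variable $t=y/x$, the same cancellation reducing $\det A$ to $x^{-4}\bigl(-F''-(F')^2+F'F''/t\bigr)$, and the same substitution $t=k'$ with $F'(k')=e^{k}$, $F''(k')=k'e^{k}/k''$ to reduce everything to the identity $k''=1-k'e^{-k}$. The only blemish is the harmless slip $x^{6}\det A$ where the scaling factor should be $x^{4}$; your handling of the boundary $y=0$ via the limit $M_y/y\to F''(0)/x^{2}$ is in fact slightly more careful than the paper's.
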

\begin{proof}
Let us calculate the partial derivatives of $M$. Let $t :=yx^{-1}$. We have 
\begin{align*}
M_{x} &= x^{-1}-yx^{-2}F'(yx^{-1}) = x^{-1}(1-tF'(t)),\\[8pt]
M_{xx} &=-x^{-2}+2yx^{-3}F'(yx^{-1})+(yx^{-2})^{2}F''(yx^{-1}) = x^{-2}(-1+2tF'(t)+t^{2}F''(t)),\\[8pt]
M_{y} &= x^{-1}F'(t),  \quad M_{yx}=-x^{-2}(F'(t)+tF''(t)),\\[8pt]
M_{yy} & = x^{-2}F''(t)\stackrel{\mathrm{Lemma~\ref{prop01}}}{>}0.
\end{align*}
To see that $A(x,y)$ is positive semidefinite  it suffices (due to the inequality $M_{yy}>0$) to check $\mathrm{det}(A)=0$. We have 
\begin{align*}
\det (A) &= M_{xx}M_{yy}-M_{xy}^{2} + \frac{M_{y}M_{yy}}{y}\\
&=x^{-4}\left[ (-1+2tF'+t^{2}F'')F'' - (F'+tF'')^{2} + \frac{F'F''}{t}\right]\\
&=x^{-4}\left[-F'' - (F')^{2} + \frac{F'F''}{t} \right].
\end{align*}
Next, for $t=k'$ by Lemma~\ref{prop01} have $F'(k')=e^{k}$ and $F''(k') = \frac{k'e^{k}}{k''}$. Therefore 
\begin{align*}
-F'' - (F')^{2} + \frac{F'F''}{t} = -\frac{k'e^{k}}{k''} - e^{2k} + \frac{e^{2k}}{k''} = \frac{e^{2k}}{k''} \left( 1 - k'' - k' e^{-k}\right)=0,
\end{align*}
due to the fact that  $k'(x) = x+ e^{-k(x)}$ (and hence $k'' = 1-k'e^{-k}$). 
\end{proof}
\subsection{Step 3: the heat-flow argument}\label{bolo88}

First we would like to give an explanation on how the flow is constructed.  For simplicity consider $n=1$. If we succeed in proving the inequality
\begin{align}\label{red1}
M(\mathbb{E} g(\xi), 0) \leq \mathbb{E} M(g(\xi), |g'(\xi)|), \qquad g: \mathbb{R} \to (0, \infty),
\end{align}
where $\xi \sim \mathcal{N}(0,1)$, and $M(x,y) = \log \, x + F(y/x)$, then we glean $\log\,\mathbb{E} g +F(0)\leq  \mathbb{E} \log\, g + \mathbb{E}F(|g'|/g)$, which for $g=e^{f}$ coincides with (\ref{fin99}). So, the question is to prove (\ref{red1}).  We consider discrete approximation of $\xi$, namely, let 
\begin{align*}
\vec{\varepsilon} =(\varepsilon_{1}, \ldots, \varepsilon_{m}),
\end{align*}
 where $\varepsilon_{j}$ are i.i.d. symmetric Bernoulli $\pm 1$ random variables. By the  Central Limit Theorem 
\begin{align*}
\frac{\varepsilon_{1}+\ldots+\varepsilon_{m}}{\sqrt{m}} \stackrel{d}{\to} \xi \quad \text{as} \quad m \to \infty. 
\end{align*}
We hope to prove the  ``hypercube analog'' of (\ref{red1}), i.e., 
\begin{align}\label{discan}
M(\mathbb{E}\, \tilde{g}(\vec{\varepsilon}), 0) \leq \mathbb{E}M(\tilde{g}(\vec{\varepsilon}), |D\tilde{g}(\vec{\varepsilon})|), \qquad \tilde{g}(\vec{\varepsilon}) = g\left(\frac{\varepsilon_{1}+\ldots+\varepsilon_{m}}{\sqrt{m}}\right),
\end{align}
for all $m\geq 1$, where the ``discrete'' gradient $|D\tilde{g}(\vec{\varepsilon})| : = \sqrt{\sum_{j=1}^{m} |D_{j} g(\vec{\varepsilon})|^{2}}$ is defined in a subtle way as  
\begin{align*}
D_{j}\tilde{g}(\varepsilon_{1}, \ldots, \varepsilon_{m}) = \frac{\tilde{g}(\varepsilon_{1}, \ldots, \varepsilon_{j}, \ldots, \varepsilon_{m}) -\tilde{g}(\varepsilon_{1}, \ldots, -\varepsilon_{j}, \ldots, \varepsilon_{m}) }{2} \quad \text{for} \quad j=1, \ldots, m. 
\end{align*}
One sees that as $m \to \infty$ we have  
\begin{align*}
D_{j}\tilde{g}(\vec{\varepsilon}) = g'\left(\frac{\varepsilon_{1}+\ldots+\varepsilon_{m}}{\sqrt{m}}\right) \frac{\varepsilon_{j}}{\sqrt{m}} + O\left(\frac{1}{m}\right)
\end{align*}
and 
\begin{align*}
|D \tilde{g}(\vec{\varepsilon})| = \sqrt{\left[g'\left(\frac{\varepsilon_{1}+\ldots+\varepsilon_{m}}{\sqrt{m}}\right) \right]^{2} + O\left(\frac{1}{\sqrt{m}}\right)}
\end{align*}
at least for bounded smooth $g$ with uniformly bounded derivatives. Thus taking the limit $m \to \infty$ we observe that the right hand side of (\ref{discan}) converges to the right hand side of (\ref{red1}),  in particular,  (\ref{discan}) implies (\ref{red1}).

Next, we take this one step further and consider the inequality (\ref{discan}) for all $\tilde{g} : \{-1,1\}^{m} \to \mathbb{R}$ instead of the specific ones defined in (\ref{discan}); in doing so we are ever so slightly enlarging the class of test functions   including those that are not invariant with respect to permutations of $(\varepsilon_{1}, \ldots, \varepsilon_{n})$. To prove 
\begin{align}\label{ook}
M(\mathbb{E} h, 0) \leq \mathbb{E} M(h, |Dh|) \quad \text{for all} \quad h :\{-1,1\}^{m} \to (0, \infty)\quad \text{and all} \quad m\geq 1, 
\end{align}
one trivial argument would be to invoke product structure of $\{-1,1\}^{m}$. For example, if we manage to show an {\em intermediate}  ``4-point'' inequality  
\begin{align}\label{two-pp}
M(\mathbb{E}_{\varepsilon_{1}} h, |D \mathbb{E}_{\varepsilon_{1}}h|) \leq \mathbb{E}_{\varepsilon_{1}} M(h, |Dh|),
\end{align}
where $\mathbb{E}_{\varepsilon_{1}}$ averages only with respect to $\varepsilon_{1}$, then by iterating  (\ref{two-pp}) we salvage
\begin{align*}
M(\mathbb{E}h, 0) = M(\mathbb{E}_{\varepsilon_{m}} \ldots \mathbb{E}_{\varepsilon_{1}}h, |D\mathbb{E}_{\varepsilon_{m}} \ldots\mathbb{E}_{\varepsilon_{1}}h|)\leq \mathbb{E}_{\varepsilon_{1}}\ldots\mathbb{E}_{\varepsilon_{m}} M(h, |Dh|) = \mathbb{E} M(h, |Dh|).
\end{align*}
Upon closer inspection, we see that (\ref{two-pp}) follows\footnote{In fact they are equivalent provided that $y \mapsto M(x,y)$ is nondecreasing} from the following $4$ point inequality 
\begin{align}\label{two-pp1}
2M(x,y) \leq M(x+a, \sqrt{a^{2}+(y+b)^{2}}) + M(x-a, \sqrt{a^{2}+(y-b)^{2}})
\end{align}
for all real numbers $x,y,a,b$ such that $x \pm a>0$. To prove (\ref{two-pp1}) for one specific $M$ seems to be a possible task, however, if we take into account that $M$ is defined by (\ref{chemif}) which  involves implicitly defined  $F$, the four parameter inequality (\ref{two-pp1}) becomes complicated (see \cite{IV2} where one such inequality was proved for $M(x,y)=-\Re\,  (x+iy)^{3/2}$ by tedious computations involving high degree polynomials  with integer coefficients). 

Expanding (\ref{two-pp1}) at point $(a,b)=(0,0)$ via Taylor series  one easily obtains a necessary assumption, the infinitesimal form of (\ref{two-pp1}) i.e., 
\begin{align} \label{mat09}
\begin{pmatrix}
M_{xx}+\frac{M_{y}}{y} & M_{xy} \\
M_{xy} & M_{yy}
\end{pmatrix}\geq 0.
\end{align} 
Of course, the infinitesimal condition (\ref{mat09}) does not necessarily imply its global two-point inequality (\ref{two-pp1}) (and in particular (\ref{ook})). Also, it may seem implausible  to believe that the positive semidefiniteness (\ref{mat09}) implies the inequality (\ref{red1}) in gauss space.  Surprisingly this last guess turns out to be correct, and  perhaps the reason lies in the fact that one only needs to verify (\ref{discan}) as $m \to \infty$ (and only for symmetric functions $\tilde{g}$). Let us ``take the limit''  and see how the heat flow arises. 

Let $\mathbb{E}_{k}$ be  the average with respect to the first $\varepsilon_{1}, \ldots, \varepsilon_{k}$, and let $\mathbb{E}^{m-k}$ be the average with respect to the remaining variables $\varepsilon_{k+1}, \ldots, \varepsilon_{m}$. Then the 4-point inequality (\ref{two-pp}) implies 
\begin{align*}
k \mapsto \mathbb{E}^{k}M(\mathbb{E}_{m-k}\,\tilde{g},  |D \mathbb{E}_{m-k}\tilde{g}|) \quad \text{is nondecreasing on}\quad  0\leq k \leq m. 
\end{align*}

The expression $ \mathbb{E}^{k}M(\mathbb{E}_{m-k}\,\tilde{g},  |D \mathbb{E}_{m-k}\tilde{g}|)$ we rewrite as $\mathbb{E}^{k}M(A,B)$, where 
\begin{align*}
&A = \mathbb{E}_{m-k} g\left( \frac{\sum_{j=1}^{k} \varepsilon_{j}}{\sqrt{k}} \sqrt{\frac{k}{m}} + \frac{\sum_{j=k+1}^{m} \varepsilon_{j}}{\sqrt{m-k}} \sqrt{1-\frac{k}{m}}\right),\\
&B =  \sqrt{\frac{k}{m} \left[ \mathbb{E}_{m-k}g'\left(\frac{\sum_{j=1}^{k} \varepsilon_{j}}{\sqrt{k}} \sqrt{\frac{k}{m}}+\frac{\sum_{j=k+1}^{m} \varepsilon_{j}}{\sqrt{m-k}}  \sqrt{1-\frac{k}{m}}\right)\right]^{2}+ O\left(\frac{k}{m^{3/2}}\right)}.
\end{align*}

Taking $k,m \to \infty$ so that $\frac{k}{m} \to s \in [0,1]$ one can conclude 
\begin{align*}
s \mapsto \mathbb{E}_{X} M(\mathbb{E}_{Y} g(X\sqrt{s} + Y\sqrt{1-s}), \sqrt{s} | \mathbb{E}_{Y} g'(X\sqrt{s} +Y\sqrt{1-s})|) \quad \text{is nondecreasing on} \quad [0,1],
\end{align*}
where $X, Y \in \mathcal{N}(0,1)$ are independent, and $\mathbb{E}_{X}$ takes the expectation with respect to random variable $X$. In other words if we let $U_{s} g(y) = \mathbb{E} g(y+\sqrt{s} X)$ to be a heat flow defined as 

\begin{align*}
\frac{\partial}{\partial s} U_{s}g = \frac{1}{2}  \frac{\partial^{2}}{\partial x^{2}} \, U_{s}g, \quad U_{0}g=g. 
\end{align*}
then 
\begin{align}\label{map1}
s \mapsto U_{s}M(U_{1-s}g, \sqrt{s} |U_{1-s}g'|) \quad \text{is nondecreasing on} \quad  [0,1]. 
\end{align}

Luckily  we may ignore all the steps by starting start from the map (\ref{map1}) and  taking its derivative in $s$ to divine  when it has nonnegative sign. Slightly abusing the notations, denote $D = \frac{\partial}{\partial x}$, and, for simplicity, let us work with the map $s \mapsto U_{s}M(U_{1-s}g, \sqrt{s}\,  U_{1-s}g')$, where we omit the absolute value in the second argument of $M$. Let $b = U_{1-s}g$. Clearly $\frac{\mathrm{d}}{\mathrm{d}s} b = -\frac{1}{2}\,  D^{2} b$.
  We have 
\begin{align*}
\frac{\mathrm{d}}{\mathrm{d}s} U_{s}M(b, \sqrt{s} Db) &= \\
&= \frac{1}{2}\, D^{2}U_{s}M(b,\sqrt{s} Db) + U_{s} \left(-\frac{1}{2}\, D^{2}b\, M_{x}+\left(\frac{1}{2\sqrt{s}}Db - \frac{\sqrt{s}}{2} D^{3}b\right)M_{y} \right)\\
&=\frac{U_{s}}{2}\left(D(M_{x}Db+M_{y}\sqrt{s} D^{2}b) - M_{x}D^{2}b+\frac{M_{y}}{\sqrt{s}}Db-M_{y}\sqrt{s} D^{3}b \right) \\
&=\frac{U_{s}}{2}\left(M_{xx}(Db)^{2}+2M_{xy}\sqrt{s} Db\,  D^{2}b+M_{yy}s (D^{2}b)^{2} +\frac{M_{y}}{\sqrt{s}}Db\right).
\end{align*}
And notice that 
\begin{align*}
&M_{xx}D^{2}b+2M_{xy}\sqrt{s} Db\,  D^{2}b+M_{yy}s (D^{2}b)^{2} +\frac{M_{y}}{\sqrt{s}}Db=\\
&\begin{pmatrix} 
Db &\sqrt{s} D^{2}b
\end{pmatrix}
\begin{pmatrix}
M_{xx}+\frac{M_{y}}{\sqrt{s}Db} & M_{xy}\\
M_{xy} & M_{yy}
\end{pmatrix}
\begin{pmatrix}
Db\\
\sqrt{s}D^{2}b
\end{pmatrix}\geq 0.
\end{align*}
It remains to extend the argument to higher dimensions and put the the absolute value back into the second argument of $M$. 
\begin{theorem}
Let $M : (0, \infty)\times [0, \infty) \to \mathbb{R}$ be such that $M \in C^{2}$ with $M_{y}(x,0)=0$,  and 
\begin{align}\label{kuku}
\begin{pmatrix}
 M_{xx} + \frac{M_{y}}{y} & M_{xy}\\
 M_{xy} & M_{yy}
 \end{pmatrix} \geq 0.
\end{align}
Then the map 
\begin{align}\label{ai2}
s \mapsto U_{s}M(U_{1-s}g, \sqrt{s} |\nabla U_{1-s} g|) \quad \text{is nondecreasing on} \quad [0,1]
\end{align}
for all smooth bounded $g : \mathbb{R}^{n} \to (0, \infty)$ with uniformly bounded first and  second derivatives.  
\end{theorem}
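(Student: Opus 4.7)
My plan is to differentiate the expression in (\ref{ai2}) pointwise in $y \in \mathbb{R}^n$ and show the resulting integrand of the derivative is nonnegative. Writing $b := U_{1-s}g$ and $v := |\nabla b|$, the semigroup identity $\frac{d}{ds}U_s \psi_s = U_s\bigl(\tfrac{1}{2}\Delta\psi_s + \partial_s\psi_s\bigr)$ reduces the theorem to the pointwise inequality
\begin{equation*}
I := \tfrac{1}{2}\Delta\bigl[M(b,\sqrt{s}\,v)\bigr] + \partial_s\bigl[M(b,\sqrt{s}\,v)\bigr] \geq 0.
\end{equation*}
The excerpt already handles the one-dimensional signed-gradient version; what remains is to extend the argument to $\mathbb{R}^n$ and to cope with the square root in $v = |\nabla b|$.

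The key ingredients, valid on $\{v>0\}$, are
\begin{equation*}
\nabla v = H\,\nabla b/v, \qquad v\,\Delta v = |H|_{HS}^2 - |\nabla v|^2 + \nabla b \cdot \nabla(\Delta b),
\end{equation*}
where $H := D^2 b$ denotes the Hessian, together with $\partial_s b = -\tfrac{1}{2}\Delta b$ and its consequence $\partial_s v = -\tfrac{1}{2v}\nabla b \cdot \nabla(\Delta b)$. Expanding $I$ by the chain rule, the two $M_x\,\Delta b$ contributions cancel, and the $M_y\sqrt{s}\,\Delta v$ and $-\tfrac{\sqrt{s}M_y}{2v}\nabla b\cdot\nabla(\Delta b)$ contributions combine via the Bochner identity to leave
\begin{equation*}
I = \tfrac{1}{2}\bigl[M_{xx}\,v^2 + 2M_{xy}\sqrt{s}\,\nabla b\cdot\nabla v + M_{yy}\,s\,|\nabla v|^2\bigr] + \tfrac{\sqrt{s}\,M_y}{2v}\bigl(|H|_{HS}^2 - |\nabla v|^2\bigr) + \tfrac{M_y\,v}{2\sqrt{s}},
\end{equation*}
with all partials of $M$ evaluated at $(b,\sqrt{s}\,v)$.

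The positivity $I \geq 0$ rests on three observations. First, $M_y \geq 0$: since $M_y(x,0)=0$ and $M_{yy} \geq 0$ (a principal minor of (\ref{kuku})), the map $y \mapsto M_y(x,y)$ is nondecreasing on $[0,\infty)$. Second, completing $\nabla b/v$ to an orthonormal basis gives $|H|_{HS}^2 \geq |H\,\nabla b/v|^2 = |\nabla v|^2$, making the middle term of $I$ nonnegative. Third, setting $\omega_1 := \nabla b\cdot\nabla v/v$, Cauchy--Schwarz yields $\omega_1^2 \leq |\nabla v|^2$, so $M_{yy}\geq 0$ permits replacing $|\nabla v|^2$ by $\omega_1^2$ inside the bracket without increasing $I$; what remains is exactly the quadratic form of the matrix in (\ref{kuku}) evaluated at $(b,\sqrt{s}\,v)$ against the vector $(v, \sqrt{s}\,\omega_1)$, which is nonnegative by the PSD hypothesis.

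The main obstacle is the singular set $\{v = 0\}$, where $\nabla v$ is undefined and the Bochner manipulation breaks down. The hypothesis $M_y(x,0) = 0$ is crucial here: the representation $M_y(x,y) = y\int_0^1 M_{yy}(x, ty)\,dt$ shows $M_y/v \to M_{yy}(b,0)$ as $v \to 0^+$, and combined with $|\nabla v|\leq \|H\|_{op}$ and $|H|_{HS}^2 \geq |\nabla v|^2$ this makes every term of $I$ locally bounded. I would close the argument by regularization: replace $v$ with $v_\varepsilon := \sqrt{|\nabla b|^2 + \varepsilon^2}$, which satisfies the analogues $\nabla v_\varepsilon = H\nabla b/v_\varepsilon$ and $v_\varepsilon \Delta v_\varepsilon = |H|_{HS}^2 - |\nabla v_\varepsilon|^2 + \nabla b\cdot\nabla(\Delta b)$, repeat the computation above (noting that the extra term $M_y(v_\varepsilon^2 - |\nabla b|^2)/(2\sqrt{s}\,v_\varepsilon) = M_y\varepsilon^2/(2\sqrt{s}\,v_\varepsilon) \geq 0$ only helps), deduce the regularized monotonicity, and pass to the limit $\varepsilon \to 0$ by dominated convergence, justified by the uniform $C^2$-bounds on $g$ propagating through the heat semigroup.
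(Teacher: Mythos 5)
Your argument is correct and is essentially the paper's own proof: both differentiate $U_{s}M(U_{1-s}g,\sqrt{s}\,|\nabla U_{1-s}g|)$ along the flow, cancel the two $M_{x}\Delta b$ contributions, absorb the Bochner remainder $|H|_{HS}^{2}-|\nabla v|^{2}\geq 0$ using $M_{y}\geq 0$ (which you derive from $M_{y}(x,0)=0$ and $M_{yy}\geq 0$ exactly as the paper does), and identify what is left, after Cauchy--Schwarz, with the quadratic form of the matrix in (\ref{kuku}) evaluated at $(b,\sqrt{s}\,v)$. The only divergence is technical: the paper removes the square root by substituting $M(x,y)=B(x,y^{2})$ and treats the set $\{|\nabla G|=0\}$ as a separate limiting case of $B_{2}$, $yB_{12}$, $y^{2}B_{22}$, whereas you regularize with $v_{\varepsilon}=\sqrt{|\nabla b|^{2}+\varepsilon^{2}}$ and pass to the limit; both devices resolve the same wrinkle.
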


\begin{proof}
Let $M(x,y) = B(x,y^{2})$. Let $B_{1}$ and $B_{2}$ be partial derivatives of $B$.  Positive semidefiniteness of the matrix (\ref{kuku}) in terms of $B$ converts to 
\begin{align}\label{vax01}
\begin{pmatrix}
B_{11}(x,y^{2})+2B_{2}(x,y^{2}) & 2yB_{12}(x,y^{2})\\
2yB_{12}(x,y^{2}) & 2B_{2}(x,y^{2})+4y^{2}B_{22}(x,y^{2})
\end{pmatrix}\geq 0
\end{align}
for all $x>0$, and all $y \geq 0$ (in fact for all $y \in \mathbb{R}$). Next, let $G = P_{1-s}g$. Clearly $\frac{\mathrm{d}}{\mathrm{ds}} G = -\frac{\Delta}{2} G$.
We have 
\begin{align*}
&\frac{\mathrm{d}}{\mathrm{ds}} U_{s}B(U_{1-s}g, s |U_{1-s}\nabla g|^{2}) = \\
&\frac{1}{2}\, U_{s} \left[ \Delta B(G, s|\nabla G|^{2})-B_{1}\Delta G +2B_{2} |\nabla G|^{2} - 2B_{2} s\nabla G \cdot \nabla \Delta G\right].
\end{align*}
Next, let $D_{j} = \frac{\partial }{\partial x_{j}}$. Then 
\begin{align*}
 D_{j}B(G, s|\nabla G|^{2}) &= B_{1} \, D_{j}G + B_{2}\, s D_{j} |\nabla G|^{2},\\[8pt]
 D_{j}^{2} B(G, s|\nabla G|^{2}) &=B_{11}(D_{j}G)^{2}+2\, B_{12} D_{j}G\, s D_{j}|\nabla G|^{2} +  B_{22}\, s^{2} (D_{j}|\nabla G|^{2})^{2}\\[8pt]
 &\quad + B_{1}D_{j}^{2}G + B_{2} \, s D_{j}^{2} |\nabla G|^{2},\\[8pt]
 \text{and} \quad \Delta B &= B_{11}|\nabla G|^{2} + 2B_{12} \nabla G \cdot s \nabla |\nabla G|^{2}+B_{22} \left|s\nabla |\nabla G|^{2}\right|^{2}\\[8pt]
 &\quad + B_{1} \Delta G + B_{2} s\Delta |\nabla G|^{2}. 
\end{align*}
Notice that $\Delta |\nabla G|^{2} = 2\nabla G\cdot \nabla \Delta G + 2\mathrm{Tr}\,  (\mathrm{Hess}\, G)^{2}$. Therefore
\begin{align}
&\Delta B(G, s|\nabla G|^{2})-B_{1}\Delta G +2B_{2} |\nabla G|^{2} - 2B_{2} s\nabla G \cdot \nabla \Delta G = \nonumber \\
&B_{11}|\nabla G|^{2} + 2B_{12} \nabla G \cdot s \nabla |\nabla G|^{2}+B_{22} \left|s\nabla |\nabla G|^{2}\right|^{2} + 2B_{2} |\nabla G|^{2} + 2B_{2}s \mathrm{Tr}\, (\mathrm{Hess}\, G)^{2} \geq\nonumber \\
& B_{11}|\nabla G|^{2}   - 2|B_{12}| |\nabla G|  \left|s \nabla |\nabla G|^{2}\right|+B_{22} \left|s\nabla |\nabla G|^{2}\right|^{2} + 2B_{2} |\nabla G|^{2} + 2B_{2}s \mathrm{Tr}\, (\mathrm{Hess}\, G)^{2}. \label{laki}
\end{align}
We observe the inequality 
\begin{align}\label{karoche}
\mathrm{Tr}\, (\mathrm{Hess}\, G)^{2} \, |\nabla G|^{2} = \sum_{j=1}^{n} |\nabla D_{j} G|^{2} |\nabla G|^{2} \geq \sum_{j=1}^{n}\left(\nabla D_{j}G \cdot \nabla G\right)^{2} = \frac{1}{4} \left|\nabla |\nabla G|^{2}\right|^{2}.
\end{align}
First we want to consider the case when $|\nabla G|=0$.  We recall $M(x,y)=B(x,y^{2})$. Therefore $B_{2}(x,0)$ exists and is equal to $\frac{1}{2} M_{yy}(x,0)$ (due to the fact that $M_{y}(x,0)=0$). Also 
$$
\lim_{y \to 0} B_{12}(x,y^{2})y=\frac{1}{2}M_{xy}(x,0),
$$
 and 
\begin{align*}
\lim_{y \to 0} B_{22}(x,y^{2})y^{2} =\lim_{y \to 0} \frac{1}{4}\left(M_{yy}(x,|y|)-2B_{2}(x,y^{2})\right) =0.
\end{align*}
Therefore, if $|\nabla G|=0$, then due to the inequality (\ref{karoche}), the expression (\ref{laki}) simplifies to 
\begin{align*}
2B_{2}(G, 0) s \mathrm{Tr}\, (\mathrm{Hess}\, G)^{2} = \frac{1}{2}M_{yy}(G, 0)  s \mathrm{Tr}\, (\mathrm{Hess}\, G)^{2} \geq 0,
\end{align*}
where the last inequality holds true by the assumption (\ref{kuku}), hence (\ref{laki}) is nonnegative. 

If $|\nabla G|> 0$ then we proceed as follows: the assumption (\ref{kuku}) implies $yM_{xx}+M_{y}\geq 0$. In particular taking $y=0$ we obtain $M_{y}(x,0)\geq 0$. Also it follows from (\ref{kuku}) that $M_{yy}\geq 0$. Thus $M_{y}(x,y)\geq 0$ for all $y\geq 0$. In particular $B_{2}(x,y^{2})\geq 0$ for all $y>0$ (and also for $y=0$ as we just noticed $B_{2}(x,0) = \frac{1}{2}M_{yy}(x,0)\geq 0$). Therefore, using (\ref{karoche}), we may estimate the last term in (\ref{laki}) from below as
$B_{2}  \frac{\left|s \nabla |\nabla G|^{2}\right|^{2}}{2s |\nabla G|^{2}}$. Finally, 
\begin{align*}
&B_{11}|\nabla G|^{2}   - 2|B_{12}| |\nabla G|  \left|s \nabla |\nabla G|^{2}\right|+B_{22} \left|s\nabla |\nabla G|^{2}\right|^{2} + 2B_{2} |\nabla G|^{2} + B_{2}  \frac{\left|s \nabla |\nabla G|^{2}\right|^{2}}{2s |\nabla G|^{2}}\\
&=
\begin{pmatrix} |\nabla G| & \frac{\sqrt{s}\left| \nabla |\nabla G|^{2}\right|}{2|\nabla G|} \end{pmatrix}
\begin{pmatrix}
B_{11}+2B_{2} & -2\sqrt{s} |\nabla G|\,  |B_{12}| \\
-2\sqrt{s} |\nabla G|\, |B_{12}| & 4s |\nabla G|^{2}B_{22} + 2B_{2}
\end{pmatrix}
\begin{pmatrix}
|\nabla G|\\
\frac{\sqrt{s}\left|\nabla |\nabla G|^{2}\right|}{2|\nabla G|}
\end{pmatrix}\geq 0
\end{align*}
by the assumption (\ref{vax01}) and the fact that $B$ is evaluated at point $(G, s|\nabla G|^{2})$.

\end{proof}

\vskip0.5cm

\noindent{\em Proof of Theorem~\ref{mthii}}.  Notice that $U_{s} g(y) = \mathbb{E}\,  g(y+\sqrt{s}X)$, therefore, comparing the values of the map (\ref{ai2}) at the endpoints $s=0$ and $s=1$ we obtain 
\begin{align*}
 M(\mathbb{E} g(X), 0)=U_{0}M(U_{1}g, \sqrt{0} |U_{1}\nabla g|)(0) \leq U_{1}M(U_{0}g,  |U_{0}\nabla g|)(0)= \mathbb{E} M(g(X), |\nabla g(X)|). 
\end{align*}
In particular,  for $g=e^{f}$ where $f \in C^{\infty}_{0}(\mathbb{R}^{n})$ we obtain 
\begin{align*}
\log\,  \mathbb{E}\,  e^{f(X)}  \leq \mathbb{E} f(X) + \mathbb{E} F(|\nabla f(X)|). 
\end{align*}
Finally, using the pointwise inequality $F(x) \leq 10\, e^{\frac{x^{2}}{2}}(1+x)^{-1}$ from Lemma~\ref{poinaa} finishes the proof of Theorem~\ref{mthii}  $\hfill\square$

\vskip0.9cm

\noindent{\em Proof of Corollary~\ref{sledoval}}.  Let $d\mu = e^{-u(x)}dx$ be the density of the log-concave random vector $X$ with $\mathrm{Hess}\, u \geq R\,  \mathrm{I_{n\times n}}$ for some $R>0$. It follows from \cite{CAF} that there exists a convex function $\psi :\mathbb{R}^{n} \to \mathbb{R}$ such that  the Brenier map $T = \nabla \psi$ pushes forward the gaussian measure $d\gamma_{n}(x)=\frac{e^{-|x|^{2}/2}}{\sqrt{(2\pi)^{n}}}dx$ onto $d\mu$ and $0\leq \mathrm{Hess}\, \psi \leq \frac{1}{\sqrt{R}}\mathrm{I_{n \times n}}$. Next, apply the inequality 
\begin{align}\label{samotxe}
\log\, \int_{\mathbb{R}^{n}} e^{f(x)} d\gamma_{n}(x) \leq \int_{\mathbb{R}^{n}}f(x) d\gamma_{n}(x) + \int_{\mathbb{R}^{n}} F(|\nabla f(x)|)d\gamma_{n}(x)
\end{align}
with $f(x) = h(\nabla \psi(x))$ for an arbitrary $h \in C^{\infty}_{0}(\mathbb{R}^{n})$. Then notice that 
\begin{align*}
|\nabla f(x)| = |\mathrm{Hess}\,\psi \nabla h(\nabla \psi(x))| \leq \frac{1}{\sqrt{R}} |\nabla h(\nabla \psi(x))|.
\end{align*}
Since $F'>0$ we conclude that 
$$
F(|\nabla f(x)|)\leq F\left( \frac{1}{\sqrt{R}} |\nabla h(\nabla \psi(x))| \right) \leq 10\, e^{\frac{|\nabla h(\nabla \psi(x))|^{2}}{2R}}(1+R^{-1/2}|\nabla h(\nabla \psi)|)^{-1}.
$$
The preceding inequality together with (\ref{samotxe}) implies 
\begin{align*}
\log\, \int_{\mathbb{R}^{n}} e^{h(x)} d\mu(x) \leq \int_{\mathbb{R}^{n}}h(x) d\mu(x)  +10\,  \int_{\mathbb{R}^{n}}  e^{\frac{|\nabla h(x)|^{2}}{2R}}(1+R^{-1/2}|\nabla h(x)|)^{-1}d\mu(x)
\end{align*}
for all $h \in C^{\infty}_{0}(\mathbb{R}^{n})$. This finishes the proof of Corollary~\ref{sledoval}.
  $\hfill\square$

\section{Applications: the proof of Theorem~\ref{mt3} and the estimate~(\ref{sled2})}\label{prim}

Let us recall the definition of dyadic martingales. 
For each $n \geq 0$ we denote by $\mathcal{D}_{n}$ dyadic intervals belonging to $[0,1)$ of level $n$, i.e., 
\begin{align*}
\mathcal{D}_{n}  =\left\{ \left[\frac{k}{2^{n}}, \frac{k+1}{2^{n}}\right), \quad k =0, \ldots, 2^{n}-1 \right\}.
\end{align*}
Given $\xi \in L^{1}([0,1],dx)$ define a {\em dyadic martingale} $\{ \xi_{k}\}_{k\geq 0}$ as  
 \begin{align*}
 \xi_{n}(x) := \sum_{I \in \mathcal{D}_{n}}  \langle \xi \rangle_{I} \mathbbm{1}_{I}(x), \quad n\geq 0, 
\end{align*}
where $\langle \xi \rangle_{I} = \frac{1}{|I|} \int_{I} \xi dx$, here $|I|$ denotes the Lebesgue length of $I$. If we let $\mathcal{F}_{n}$ to be the sigma algebra generated by the dyadic intervals in $\mathcal{D}_{n}$ then  $\xi_{n} = \mathbb{E} (\xi|\mathcal{F}_{n})$ is the martingale with respect to the increasing family of filtrations $\{ \mathcal{F}_{k}\}_{k \geq 0}$. Next we define the quadratic variation 
\begin{align*}
[\xi] =  \sum_{n\geq 0} d_{n}^{2},
\end{align*}
where $d_{n} := \xi_{n} - \xi_{n-1}$ is the martingale difference sequence. In what follows to avoid the issues with convergence of the infinite series we will be assuming that all but finitely many $d_{b}$ are zero,  i.e., $\xi_{N}=\xi_{N+1}=\ldots = \xi$  for $N$ sufficiently large. Such martingales we call simple dyadic martingales, they are also known as Walsh-Paley martingales \cite{hyt}. 

Let $N(p,t) := \log(p)+G(p/\sqrt{t})$ for $p>0$, $t \geq 0$ where
\begin{align*}
G(s) = \int_{s}^{\infty}\int_{r}^{\infty}u^{-2}e^{\frac{r^{2}-u^{2}}{2}}dudr, \quad s>0.
\end{align*}
\begin{lemma}
For all real numbers $p, a, t$ we have 
\begin{align}\label{utol9}
N(p+a, t+a^{2}) + N(p-a,t+a^{2})\geq 2N(p,t)
\end{align}
provided that $p \pm a >0$ and $t \geq 0$. 
\end{lemma}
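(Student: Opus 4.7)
I would derive (\ref{utol9}) from the fact that $N$ satisfies the backward heat equation $N_{pp}+2N_{t}=0$ on $\{p>0,\,t\geq 0\}$; this is immediate after checking the ODE $G''(s)=s^{-2}+sG'(s)$, obtained by differentiating $G'(s)=-\int_{s}^{\infty}u^{-2}e^{(s^{2}-u^{2})/2}du$ under the integral, and then using the chain rule in $s=p/\sqrt{t}$. Setting $\phi(a):=N(p+a,t+a^{2})+N(p-a,t+a^{2})-2N(p,t)$, evenness of $\phi$ together with the PDE yield $\phi(0)=\phi'(0)=\phi''(0)=0$, so the desired inequality $\phi(a)\geq 0$ cannot come from a purely local Taylor argument and must exploit the global form of $N$.

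Differentiating $\phi$ once in $a$ and using $N_{pp}=-2N_{t}$ to eliminate second $p$-derivatives rewrites
\[\phi'(a)\;=\;2a\bigl[N_{t}(p+a,T)+N_{t}(p-a,T)\bigr]\;-\;2\int_{p-a}^{p+a}N_{t}(u,T)\,du,\qquad T:=t+a^{2},\]
so $\phi'(a)\geq 0$ for $a\geq 0$ is exactly an endpoint-versus-interval-average inequality for $g(v):=N_{t}(p+v,T)$ on $[-a,a]$. If $g$ were convex in $v$ this would be a one-line Jensen argument.

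The main obstacle is that $g$ is not globally convex, because $N_{ppt}=-2N_{tt}$ changes sign as $p/\sqrt{t}$ varies. To bypass this I would substitute the representation
\[N(p,t)\;=\;\log p+\int_{0}^{\infty}\frac{e^{-\xi}}{2\xi}\Bigl[1-\frac{p}{\sqrt{p^{2}+2t\xi}}\Bigr]d\xi,\]
obtained from the double integral defining $G$ by Fubini and a $\tan\theta$-substitution, and combine it with the Frullani identity $\log(1-a^{2}/p^{2})=-\int_{0}^{\infty}2e^{-p\xi}[\cosh(a\xi)-1]/\xi\,d\xi$. These two representations rewrite $\phi(a)$ as a single integral over $\xi\in(0,\infty)$ with an explicit integrand, so the problem reduces to pointwise nonnegativity of that integrand in $\xi$. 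The hardest part will be this pointwise verification: because $\phi(a)=O(a^{4})$ near $a=0$ the resulting polynomial inequality has no leading-order slack, and the argument becomes a genuine four-parameter computation in $(p,a,t,\xi)$ after clearing the square roots.
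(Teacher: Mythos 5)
Your reduction is on the right track up to the point where you abandon it: the backward heat equation $N_{pp}+2N_{t}=0$ is verified exactly as in the paper, and your identity
\begin{align*}
\phi'(a)=2a\bigl[N_{t}(p+a,T)+N_{t}(p-a,T)\bigr]-2\int_{p-a}^{p+a}N_{t}(u,T)\,du,\qquad T=t+a^{2},
\end{align*}
does reduce the claim to a Hermite--Hadamard (endpoints versus average) inequality for $u\mapsto N_{t}(u,T)$ on $[p-a,p+a]$. The gap is your assertion that this function fails to be convex because ``$N_{ppt}=-2N_{tt}$ changes sign.'' It does not: one has $N_{tt}\leq 0$ throughout $\{p>0,\ t\geq 0\}$. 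Indeed, $N_{tt}=\tfrac{1}{4t^{2}}\bigl[s^{2}G''(s)+3sG'(s)\bigr]$ with $s=p/\sqrt{t}$, and after eliminating $G''$ via $G''(s)=s^{-2}+sG'(s)$ the sign of $N_{tt}$ is that of $1+(s^{3}+3s)G'(s)$, which a one-variable monotonicity argument (the function $\frac{e^{-s^{2}/2}}{s^{3}+3s}-\int_{s}^{\infty}u^{-2}e^{-u^{2}/2}du$ increases to $0$) shows is $\leq 0$ for all $s>0$. You cannot sidestep this computation: since $\phi(a)=-\tfrac{2}{3}N_{tt}(p,t)\,a^{4}+o(a^{4})$, the inequality you are trying to prove already forces $N_{tt}\leq 0$, so if $N_{tt}$ genuinely changed sign the lemma would be false. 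Once $N_{tt}\leq 0$ (hence $N_{ppt}\geq 0$) is in hand, your own first route closes immediately.

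The substitute route you propose is not a proof. The representation $N(p,t)=\log p+\int_{0}^{\infty}\frac{e^{-\xi}}{2\xi}\bigl[1-p(p^{2}+2t\xi)^{-1/2}\bigr]d\xi$ is correct, but merging it with the Frullani formula for $\log(1-a^{2}/p^{2})$ gives no reason to expect an integrand that is nonnegative pointwise in $\xi$: the two formulas use $\xi$ in structurally different ways (through $\sqrt{p^{2}+2t\xi}$ versus through $e^{-p\xi}$), so the negative logarithmic contribution need not be compensated slice by slice rather than only after integration. You explicitly defer this ``hardest part,'' so the argument is incomplete precisely where it matters. For comparison, the paper proves exactly the concavity $N_{tt}\leq 0$ and then concludes probabilistically: $N(p+B_{s},t+s)$ is a martingale by the backward heat equation, and stopping it when $B$ exits $(-a,a)$, using $\mathbb{E}(\tau\mid B_{\tau}=\pm a)=a^{2}$ and Jensen in the $t$-slot, yields (\ref{utol9}); your Hermite--Hadamard reduction would furnish a purely analytic alternative to that last step once the concavity is established.
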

\begin{proof}
First we verify that $N(p,t)$ satisfies backward heat equation 
\begin{align}\label{heateq0}
\frac{N_{pp}}{2}+N_{t}=0.
\end{align}
Indeed, we have 
\begin{align}
N_{pp}+2N_{t} &= -\frac{1}{p^{2}}+\frac{G''(p/\sqrt{t})}{t}-G'(p/\sqrt{t})pt^{-3/2}  \nonumber \\
&=\frac{1}{p^{2}} \left( -1 + s^{2}G''(s) - s^{3}G'(s)\right), \label{der0}
\end{align}
where $s=p/\sqrt{t}$. Direct calculations show 
\begin{align}
G'(s) = -e^{s^{2}/2}\int_{s}^{\infty}u^{-2}e^{-\frac{u^{2}}{2}}du, \quad G''(s) = s^{-2}-se^{s^{2}/2}\int_{s}^{\infty}u^{-2}e^{-\frac{u^{2}}{2}}du. \label{der1}
\end{align}
Substituting (\ref{der1}) into (\ref{der0}) we see that the expression in (\ref{der0}) is zero. 

Next, we claim that $t \mapsto N(p,t)$ is concave. Indeed, 
\begin{align*}
N_{t} &=-\frac{1}{2}pt^{-3/2} G'(p/\sqrt{t}),\\
N_{tt} &=\frac{1}{4}p^{2}t^{-3}G''(p/\sqrt{t}) +\frac{3}{4}pt^{-5/2}G'(p/\sqrt{t}) =\frac{1}{4t^{2}}\left[ s^{2} G''(s)+3sG'(s)\right].
\end{align*}
Since $N_{pp}+2N_{t}=0$ we have $G'' = s^{-2}+sG'(s)$ by (\ref{der0}), therefore, the sign of $N_{tt}$ coincides with the sign of $1+(s^{3}+3s)G'(s)$. Using (\ref{der1}) it suffices to show that 
\begin{align*}
\varphi(s):= \frac{e^{-s^{2}/2}}{s^{3}+3s}-\int_{s}^{\infty}s^{-2} e^{-s^{2}/2}\leq 0  \quad \text{for all} \quad s \geq 0. 
\end{align*}
We have $\varphi(\infty)=0$, and 
\begin{align*}
\varphi'(s) = e^{-s^{2}/2}\left[-\frac{1}{3+s^{2}}-\frac{3+3s^{2}}{(3s+s^{3})^{2}} +\frac{1}{s^{2}}\right] = \frac{6e^{s^{2}/2}}{(3s+s^{2})^{2}}\geq 0,
\end{align*}
thereby $\varphi(s) \leq 0$, and hence $t \mapsto N(p,t)$  is concave for $t\geq 0$.

Next, consider the process 
\begin{align*}
X_{s} = N(p+B_{s},t+s),
\end{align*}
where $B_{s}$ is the standard Brownian motion starting at zero. It follows from Ito's formula that $X_{s}$ is a martingale. Indeed,  we have 
\begin{align*}
dX_{s} = N_{s} ds+N_{p} dB_{s} +\frac{1}{2}V_{pp} ds  \stackrel{(\ref{heateq0})}{=}  N_{p} dB_{s}.
\end{align*}
Define the stopping time 
\begin{align*}
\tau = \inf\{ s \geq 0 : B_{s} \notin(-a,a)\}.
\end{align*}
Set $Y_{s} = Y_{\min\{s, \tau\}}$ for $s\geq 0$. Clearly $Y_{s}$ is a martingale.  On the one hand $Y_{0} = N(p,t)$. On the other hand 
\begin{align*}
&\mathbb{E} Y_{\infty} = \mathbb{E} N(p+B_{\tau}, t+\tau) = \\
&\mathbb{E}(N(p-a, t+\tau) | B_{\tau}=-a) \mathbb{P}(B_{\tau}=-a)+\mathbb{E}(N(p+a, t+\tau) | B_{\tau}=-a) \mathbb{P}(B_{\tau}=-a)\\
&\stackrel{\mathrm{concavity} \, \, t \mapsto N(p,t)}{\leq}\frac{1}{2}\left[  N(p-a, t+\mathbb{E} (\tau| B_{\tau}=-a))+N(p+a, t+\mathbb{E}( \tau| B_{\tau}=a))\right].
\end{align*}
Finally, as $B^{2}_{s}-s$ is a martingale, we have $0=\mathbb{E} (B^{2}_{\tau}-\tau) = a^{2}- \mathbb{E} \tau$. By symmetry we obtain $\mathbb{E}( \tau| B_{\tau}= - a) = \mathbb{E}( \tau| B_{\tau}= a) =a^{2}$. Thus the lemma  follows from the optional stopping theorem. 

\end{proof}

Before we complete the proof of Theorem~\ref{mt3} let us make a remark. If $N(p,t)$ is an arbitrary smooth function satisfying the backwards heat equation (\ref{heateq0}) and the inequality (\ref{utol9}) then $t \mapsto N(p,t)$ must be concave. In other words, the concavity of $t \mapsto N(p,t)$ is necessary and sufficient for the inequality (\ref{utol9}) to hold provided that $N$ solves backwards heat equation. Indeed, let $r(a)=N(p+a,t+a^{2})$. We have 
\begin{align*}
r'(a) &=N_{p}+2aN_{t},\\
r''(a) &= N_{pp}+4aN_{pt}+2N_{t}+4a^{2}N_{tt}\stackrel{(\ref{heateq0})}{=}4aN_{pt}+4a^{2}N_{tt},\\
r'''(a) &= 4N_{pt}+4aN_{ppt}+8a^{2}N_{ptt}+8aN_{tt}+4a^{2}N_{ttp}+8a^{3}N_{ttt}\\
&\stackrel{(\ref{heateq0})}{=}4N_{pt}+12a^{2}N_{ptt}+8a^{3}N_{ttt},\\
r''''(a) &=4N_{ptp}+8aN_{ptt}+24a N_{ptt}+12a^{2}N_{pttp}+24a^{3}N_{pttt}\\
 &+24a^{2}N_{ttt}+8a^{3}N_{tttp}+16a^{4}N_{tttt}\\
 &\stackrel{(\ref{heateq0})}{=}4N_{ppt}+32aN_{ptt}+32a^{3}N_{pttt}+16a^{4}N_{tttt}.
\end{align*}
By Taylor's formula we have 
\begin{align*}
&N(p+a, t+a^{2})+N(p-a, t+a^{2})=r(a)+r(-a) =2r(0)+r''(0) a^{2}+r''''(0) \frac{a^{4}}{12}+o(a^{4})\\
&=2N(p,t)+N_{ppt}(p,t)\frac{a^{4}}{3}+o(a^{4}) \stackrel{(\ref{heateq0})}{=} 2N(p,t) - N_{tt}(p,t) \frac{2a^{4}}{3}+o(a^{4}).
\end{align*}
Thus it follows from (\ref{heateq0}) that the limit 
\begin{align*}
\lim_{a \to 0} \frac{N(p+a, t+a^{2})+N(p-a,t+a^{2})-2N(p,t)}{a^{4}} = -  \frac{2}{3} N_{tt}
\end{align*}
is non-negative, i.e., $t \mapsto N(p,t)$ is concave. 

\vskip1cm 

Now we are ready to complete the proof of Theorem~\ref{mt3}. Let $N\geq 0$ be such that $\xi_{N}=\xi_{N+1}=...=\xi$. We have  
\begin{align*}
&\mathbb{E} N(\xi, [\xi]) =\mathbb{E} N(\xi_{N}, [\xi_{N}]) =\\
&\mathbb{E} \left(\mathbb{E}\left(N(\xi_{0}+(\xi_{1}-\xi_{0})+\ldots+(\xi_{N}-\xi_{N-1}), (\xi_{1}-\xi_{0})^{2}+\ldots+(\xi_{N}-\xi_{N-1})^{2}) |\mathcal{F}_{N-1}\right) \right).
\end{align*}
Notice that random variables $\eta = \xi_{0}+(\xi_{1}-\xi_{0})+\ldots+(\xi_{N-1}-\xi_{N-2})$ and $\zeta = (\xi_{1}-\xi_{0})^{2}+\ldots+(\xi_{N-1}-\xi_{N-2})^{2}$ are $\mathcal{F}_{N-1}$ measurable. Yet on each atom $Q$  of $\mathcal{F}_{N-1}$ the random variable $\xi_{N-1}-\xi_{N}$ takes values $\pm A$ with equal probabilities $|Q|/2$.   Then it follows from (\ref{utol9}) that 
\begin{align*}
\mathbb{E} N(\xi_{N}, [\xi_{N}])  \geq N(\xi_{N-1}, [\xi_{N-1}]).
\end{align*}
Iterating this inequality  and using the boundary value $N(p,0) =\log p$ for $p>0$ we obtain $\mathbb{E} N(\xi, [\xi]) \geq \mathbb{E}N(\xi_{0}, 0) = \ln \mathbb{E} \xi$. This finishes the proof of Theorem~\ref{mt3}. 

\vskip1cm
The inequality (\ref{sled2}) follows from 
\begin{lemma}\label{mokla1}
We have 
\begin{align}\label{twoss}
\log(1+y^{-2}) \geq \int_{y}^{\infty}\int_{x}^{\infty} e^{\frac{-t^{2}+x^{2}}{2}}t^{-2}dtdx \geq \frac{1}{3} \log (1+y^{-2})\end{align}
holds for all $y>0$. 
\end{lemma}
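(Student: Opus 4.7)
The plan is to convert both inequalities into pointwise bounds on the Mills ratio $m(y):=e^{y^{2}/2}\int_{y}^{\infty}e^{-t^{2}/2}dt$. Differentiating $G(y)$ in its lower limit and using the identity
$$e^{y^{2}/2}\int_{y}^{\infty}t^{-2}e^{-t^{2}/2}dt=\frac{1}{y}-m(y)$$
(which comes from integrating $\frac{d}{dt}(-t^{-1}e^{-t^{2}/2})=t^{-2}e^{-t^{2}/2}+e^{-t^{2}/2}$), I obtain $G'(y)=m(y)-1/y<0$. Since $G(\infty)=0$ and $L(y):=\log(1+y^{-2})$ has $L(\infty)=0$ and $L'(y)=-2/(y(y^{2}+1))$, integrating from $y$ to $\infty$ shows that both inequalities of the lemma follow from the pointwise sandwich
$$\frac{2}{3y(y^{2}+1)}\ \le\ \frac{1}{y}-m(y)\ \le\ \frac{2}{y(y^{2}+1)}.$$

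Both halves of this sandwich come from the nonnegativity of the moments $J_{k}:=\int_{0}^{\infty}s^{k}e^{-ys-s^{2}/2}ds$ of the tilted Gaussian. Applying integration by parts to $\frac{d}{ds}(s^{k-1}e^{-ys-s^{2}/2})$ gives the recursion $J_{k}=(k-1)J_{k-2}-yJ_{k-1}$ with base values $J_{0}=m(y)$, $J_{1}=1-ym(y)$. A short computation then yields
$$J_{2}=(1+y^{2})m(y)-y,\qquad J_{4}=(y^{4}+6y^{2}+3)m(y)-y(y^{2}+5).$$

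The upper bound is the easy half: $J_{2}\ge 0$ reads $m(y)\ge y/(1+y^{2})$, i.e.\ $1/y-m(y)\le 1/(y(y^{2}+1))$, so integration yields in fact the stronger $G(y)\le \frac{1}{2} L(y)\le L(y)$. For the lower bound, $J_{4}\ge 0$ gives the Mills-ratio upper estimate $m(y)\le \frac{y(y^{2}+5)}{y^{4}+6y^{2}+3}$; cross-multiplying the algebraic inequality $\frac{y(y^{2}+5)}{y^{4}+6y^{2}+3}\le \frac{3y^{2}+1}{3y(y^{2}+1)}$ collapses it to $y^{4}+3\ge 0$. Rearranged this says $1/y-m(y)\ge \frac{2}{3y(y^{2}+1)}$, and integration completes the proof.

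The delicate point is calibrating the Mills-ratio upper bound used in the last step. The trivial $m(y)\le 1/y$ yields nothing (only $G\ge 0$), and the next continued-fraction truncation $m(y)\le \frac{y^{2}+2}{y(y^{2}+3)}$ coming from $J_{3}\ge 0$ only clears the required threshold for $y\ge \sqrt{3}$. Pushing one step further to $J_{4}\ge 0$ is exactly what makes the comparison hold uniformly in $y>0$, and this is the main technical content of the argument.
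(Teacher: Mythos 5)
Your reduction of both inequalities to the pointwise sandwich $\tfrac{2}{3y(y^{2}+1)}\le \tfrac{1}{y}-m(y)\le \tfrac{2}{y(y^{2}+1)}$ is correct and is in substance the same first step as the paper's (there it appears as the sign condition on $\tfrac{d}{dy}\bigl(G(y)-C\log(1+y^{-2})\bigr)$), and your upper half is fine: $J_{2}\ge 0$ gives $m(y)\ge y/(1+y^{2})$ and hence even the stronger $G\le\tfrac12\log(1+y^{-2})$. The lower half, however, contains a sign error that breaks the argument. From $J_{4}=(y^{4}+6y^{2}+3)\,m(y)-y(y^{2}+5)\ge 0$ you obtain the \emph{lower} bound $m(y)\ge \tfrac{y(y^{2}+5)}{y^{4}+6y^{2}+3}$, not the upper bound you state: positivity of the even moments always bounds the Mills ratio from below, of the odd moments from above. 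The inequality you actually invoke, $m(y)\le \tfrac{y(y^{2}+5)}{y^{4}+6y^{2}+3}$, is false --- the right-hand side tends to $0$ as $y\to 0^{+}$ while $m(0^{+})=\sqrt{\pi/2}$, and already at $y=1$ it would read $0.655\ldots\le 0.6$. Chaining a lower bound on $m$ with your (correct) algebraic comparison $\tfrac{y(y^{2}+5)}{y^{4}+6y^{2}+3}\le\tfrac{3y^{2}+1}{3y(y^{2}+1)}$ says nothing about whether $m(y)\le\tfrac{3y^{2}+1}{3y(y^{2}+1)}$, which is what the lower bound of the lemma requires.

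The gap also cannot be patched by going one step further in the same family. The next genuine upper bound, from $J_{5}\ge 0$, is $m(y)\le\tfrac{y^{4}+9y^{2}+8}{y(y^{4}+10y^{2}+15)}$, and comparing it with the target $\tfrac{3y^{2}+1}{3y(y^{2}+1)}$ reduces to $y^{4}+4y^{2}-9\ge 0$, which fails for $y^{2}<\sqrt{13}-2$. More generally each odd-order truncation behaves like $\tfrac{(2k)!!}{(2k+1)!!}\,y^{-1}$ near the origin, and this coefficient does not fall below the required $1/3$ until $k$ is quite large, so this route would demand a high-order truncation and a heavy polynomial verification. The target bound $m(y)\le\tfrac{3y^{2}+1}{3y(y^{2}+1)}$ is true, but it needs a different proof; the paper gets it by one more monotonicity step, showing that $\psi(y)=-\int_{y}^{\infty}t^{-2}e^{-t^{2}/2}\,dt+\tfrac{2}{3}\,\tfrac{e^{-y^{2}/2}}{y^{3}+y}$ vanishes at infinity and has nonnegative derivative, the sign of the derivative reducing to a quadratic in $(1+y^{2})^{-1}$. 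You should either adopt that second monotonicity step or find a genuinely different certificate for the Mills-ratio upper bound.
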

\begin{proof}
For a positive constant $C>0$ consider a map 
\begin{align*}
h(y;C) = \int_{y}^{\infty}\int_{x}^{\infty} e^{\frac{-t^{2}+x^{2}}{2}}t^{-2}dtdx - C\log(1+y^{-2}), \quad y>0.
\end{align*}
Notice that $h (\infty;C)=0$. To prove the right hand side inequality in (\ref{twoss}) (or the left hand side in (\ref{twoss})) it suffices to show 
\begin{align}\label{nabiji}
h_{y}(y;C) = -\int_{y}^{\infty} \frac{e^{\frac{-t^{2}+y^{2}}{2}}}{t^{2}}dt +2C \frac{1}{y^{3}+y} \leq 0
\end{align}
for $C=1/3$ (or $h_{y}(y;C) \geq 0$ for $C=1$). Next, consider $\psi(y;C) = e^{-y^{2}/2}h'(y;C) = -\int_{y}^{\infty}e^{-t^{2}/2}t^{-2}dt+2C \frac{e^{-y^{2}/2}}{y^{3}+y}$. Clearly $\psi(\infty; C)=0$. To show (\ref{nabiji}) for $C=\frac{1}{3}$ (or its reverse inequality when  $C=1$) it suffices to verify that 
\begin{align*}
\psi_{y} (y;C) &= \frac{Ce^{-y^{2}/2}}{y^{2}}\left(\frac{1}{C}-2\frac{(3y^{2}+1)}{(y^{2}+1)^{2}} - \frac{y^{2}}{y^{2}+1} \right)\\
 &=\frac{Ce^{-y^{2}/2}}{y^{2}}\left(\frac{1}{C}-1 +\frac{4}{(y^{2}+1)^{2}} - \frac{5}{y^{2}+1} \right)  \geq 0
\end{align*}
for $C=\frac{1}{3}$ (or the reverse inequality for $C=1)$. Let $s=(y^{2}+1)^{-1} \in [0,1]$. Then $-1+4t^{2}-5t$ is minimized on $[0,1]$ when $t=5/8$ and its minimal value is $-41/16$ (or maximized on $[0,1]$ when $t=0$ and its maximal value is $-1$).
 The lemma is proved.

\end{proof}

\section{Concluding remarks}

One may ask how we guessed $N(p,t)$ which played an essential role in the proof of Theorem~\ref{mt3}. There is a general argument \cite{INV2} which informally says that ``any estimate in Gauss space  (or more generally on the hamming cube) involving $f$ and its gradient, has a corresponding {\em dual estimate} for a stopped Brownian motion and its quadratic variation (or more generally dyadic square function)''. For example, to prove the inequality (\ref{oo1}), there was a certain function $M(x,y)$ used in the proof. This function satisfies Monge--Ampere type PDE 
\begin{align}\label{mga}
\mathrm{det} \begin{pmatrix}
M_{xx}+\frac{M_{y}}{y} & M_{xy} \\
M_{xy} & M_{yy}
\end{pmatrix} = 0
\end{align}
with a boundary condition $M(x,0)=\log(x)$ so that the matrix in (\ref{mga}) is positive definite. Suppose we would like to solve the PDE (\ref{mga}) in general. Using exterior differential systems (see details in~\cite{IV3}) the PDE may be linearized to the backwards heat equation, namely, locally the solutions can be parametrized as 
\begin{align*}
\begin{cases}
&M(x,y)=-px+\sqrt{t} y + u(p,t),\\
&x=-u_{p}(p,t),\\
&y=2\sqrt{t} u_{t}(p,t),
\end{cases}
\end{align*}
where $u$ satisfies backwards heat equation 
\begin{align*}
\begin{cases}
&u_{t}+\frac{u_{pp}}{2}=0, \\
&u(M_{x}(x,0),0)=M(x,0)-xM_{x}(x,0),
\end{cases}
\end{align*}
with $t\geq 0$, and $p \in \Omega \subset \mathbb{R}$.  An important observation is that if $u$ happens to satisfy 
\begin{align*}
u(p+a, t+a^{2})+u(p-a,t+a^{2}) \geq 2u(p,t)
\end{align*}
then under some additional assumptions on $u$, one expects an identity 
\begin{align*}
M(x,y) = \sup_{t} \inf_{p } \left\{ -px+\sqrt{t} y + u(p,t) \right\} =\inf_{p}  \sup_{t} \left\{ -px+\sqrt{t} y + u(p,t) \right\},
\end{align*}
which, if true,  implies that $M$ satisfies 4-point inequality (\ref{two-pp1}), see \cite{INV2,INV3} for more details. These functions $M(x,y)$ and $u(p,t)$ we call dual to each other. One may verify that for our particular $M$ defined by (\ref{mfun}), the corresponding dual $u(p,t)$ is 
\begin{align*}
u(p,t) =1+\log(-p)+\int_{-p/\sqrt{t}}^{\infty}\int_{s}^{\infty}r^{-2}e^{\frac{-r^{2}+s^{2}}{2}}drds, \quad p<0, \quad t\geq 0, 
\end{align*}
which coincides with $N(p,t)$ after  subtracting $1$,  and reflecting the variable $p$. 

Using this approach, one could try to prove 4-point inequality (\ref{two-pp1}) which would imply 
$$
\mathbb{E} M(g, |Dg|) \geq M(\mathbb{E}g, 0) \quad \text{for all} \quad g :\{-1,1\}^{n} \to \mathbb{R}_{+}.
$$
So, one may hope to obtain (\ref{mthii}) on the hamming cube after substituting $g=e^{f}$. 
However, we did not proceed with this path on the unfortunate grounds that the chain rule misbehaves on the hamming cube, i.e., the identity $\frac{|De^{f}|}{e^{f}} = |Df|$ does not hold. Therefore, to prove  (\ref{oo1})  on the hamming cube  perhaps different ideas are needed. 

Our last remark is that one may provide  another proof of (\ref{sled2}) using a simpler function compared to $N$ (what we call   {\em the  supersolution}). Indeed,  consider 
\begin{align*}
N^{\mathrm{sup}}(p,t) = \frac{1}{2}\log(p^{2}+t), \quad t \geq 0, p>0.
\end{align*}
Notice that 
\begin{align}
&N^{\mathrm{sup}}(p,0)=\log(p), \label{mk1}\\
&\frac{N_{pp}}{2}+N_{t} = \frac{t+t^{2}}{2(p^{2}+t)^{2}} \geq 0, \label{mokla2}\\
&N^{\mathrm{sup}}_{tt} = -\frac{1}{2}\frac{1}{(t+p^{2})^{2}}\leq 0.\label{mk2}
\end{align}
Using the same argument as in the proof of (\ref{martin}) we verify that $N^{\mathrm{sup}}(p,t)$ satisfies (\ref{utol9}). Notice that $N^{\mathrm{sup}}$ does not solve the backwards heat equation, however, due to inequality (\ref{mokla2})  the stochastic  process $X_{s}$ constructed in the proof of (\ref{utol9}) will be submartingale which is sufficient for the proof of  (\ref{utol9}). Thus we obtain 
\begin{align*}
\log \mathbb{E} \xi_{\infty} - \mathbb{E} \log \xi_{\infty} \leq \frac{1}{2} \mathbb{E} \log\left( 1+\frac{[\xi_{\infty}]}{\xi_{\infty}^{2}}\right)
\end{align*}
which improves on (\ref{sled2}) by a factor of $1/2$. 

The supersolution $N(p,t)$ was guessed from the form of the inequality (\ref{sled2})  by considering $\log(p)+C \log(1+\frac{t}{p^{2}})$ and choosing an optimal constant $C$ (in our case $C=1/2$ worked well). It was a good coincidence that such $N^{\mathrm{sup}}$ satisfies (\ref{mk1})
, (\ref{mokla2}), and (\ref{mk2}).  However, if one tries to construct a supersolution to the inequality (\ref{oo1}) one may hope that, by chance, 
\begin{align*}
M(x,y) = \log(x)+ C e^{\frac{y^{2}}{2x^{2}}}(1+y/x)^{-1}
\end{align*}
may work for some positive $C$. A direct calculation shows that there is no positive constant $C$ such that  the inequality (\ref{kuku}) holds true.

\subsection*{Acknowledgments} The authors thank Sergey Bobkov and Changfeng Gui for helpful discussions and comments. P.I. was supported in part by NSF grants DMS-1856486, DMS-2052645 and CAREER-DMS-1945102 and DMS-2052865.

\end{document}